\newtheorem{theorem}{Theorem}[section]
\newtheorem{lemma}[theorem]{Lemma}
\newtheorem{proposition}[theorem]{Proposition}
\newtheorem{corollary}[theorem]{Corollary}
\newcounter{intro}
\newtheorem{introthm}[intro]{Theorem}
\theoremstyle{definition}
\newtheorem{definition}[theorem]{Definition}
\newtheorem{example}[theorem]{Example}
\newtheorem{remark}[theorem]{Remark}
\newtheorem{chunk}[theorem]{}
\newtheorem*{ack}{Acknowledgements}
\theoremstyle{definition}
\newtheorem{setup}{Setup}
\newcommand{\Spec}{{\operatorname{Spec}}}
\newcommand{\Hom}{{\operatorname{Hom}}}
\newcommand{\f}{\bm{f}}
\newcommand{\V}{{\rm{V}}}
\DeclareMathOperator{\id}{id}
\DeclareMathOperator{\height}{height}
\DeclareMathOperator{\rank}{rank}
\newcommand{\m}{\mathfrak{m}}
\newcommand{\p}{\mathfrak{p}}
\DeclareMathOperator{\onto}{\twoheadrightarrow}
\DeclareMathOperator{\Der}{Der}
\DeclareMathOperator{\Sing}{Sing}
\newcommand{\F}{\mathbb{F}}
\newcommand{\Q}{\mathbb{Q}}
\newcommand{\C}{\mathbb{C}}
\newcommand{\Z}{\mathbb{Z}}
\newcommand{\q}{\mathfrak q}
\newcommand{\J}{\mathcal{J}}
\newcommand{\ma}{\mathfrak a}
\title[A Jacobian Criterion in Ramified Mixed Characteristic]{Singular loci of algebras over ramified discrete valuation rings}
\author[N.~KC]{Nawaj KC}
\address{Department of Mathematics,
University of Nebraska, Lincoln, NE 68588, U.S.A.}
\email{nkc3@huskers.unl.edu}
\keywords{Singular locus, Jacobian criterion, mixed characteristic, ramified discrete valuation rings, derivations, module of differentials.}
\subjclass[2020]{Primary: 13N15, 13N05. 13F30.}
\begin{document}

\begin{abstract}
    \noindent When $k$ is a field, the classical Jacobian criterion computes the singular locus of an equidimensional, finitely generated $k$-algebra as the closed subset of an ideal generated by appropriate minors of the so-called Jacobian matrix. Recently, Hochster-Jeffries and Saito have extended this result for algebras over any unramified discrete valuation ring of mixed characteristic via the use of $p$-derivations. Motivated by these results, in this paper, we state and prove an analogous Jacobian criterion for algebras over ramified discrete valuation rings of mixed characteristic.
\end{abstract}

\maketitle

\section*{Introduction}\label{s_intro}
\noindent Suppose $R$ is a finitely generated, equidimensional $\C$-algebra. We consider the set $\Sing R = \{ \p \in \Spec R \mid R_{\p} \text{ is singular} \}$. A local ring is called singular if it is not a regular local ring, i.e. the minimal number of generators of the maximal ideal is strictly greater than the Krull dimension of the ring. Under our hypothesis, $\Sing R$ is a closed subset of $\Spec R$ and, in fact, we can explicitly compute it. We may assume $R = S/I$ where $S = \C[x_1, \ldots, x_n]$ and $I$ is an ideal ${I = (\f)= (f_1, \ldots, f_t) \subseteq S}$ with $h = \height I$. The \textit{Jacobian matrix} on the set $\f$ is defined as follows.  \[ \J(\mathbf{f}) = \begin{pmatrix}
 \frac{ \partial f_1}{\partial x_1} &\ldots & \frac{ \partial f_t}{\partial x_n} \\
\vdots & \ddots &\vdots \\
\frac{ \partial f_1}{\partial x_n}&\ldots &\frac{\partial f_t}{\partial x_n}
\end{pmatrix}. \]
\noindent Let $I_h(\J(\mathbf{f}))$ denote the ideal generated by $h \times h$ minors of $\J(\mathbf{f})$ in $R$. By the classical Jacobian criterion \cite[Ch. 16]{Eisenbud:1995}, we have the equality: \[ \Sing R = \V(I_{h}(\J(\mathbf{f}))). \]This theorem which goes back to Zariski \cite{Zariski:1947} holds with suitable modifications for all finitely generated, equidimensional $k$-algebras for $k$ any field \cite{Samuel:1955, Nagata:1957, Brezuleanu/Radu:1986}. However, this result does not naively extend to algebras of finite type over discrete valuation rings of mixed characteristic $(0, p)$ where $p$ is a prime integer. For example, consider $V = \Z_p[x]/(x^2-p)$ which is isomorphic to a ramified discrete valuation ring $\Z_p[\sqrt{p}]$ with uniformizer $\sqrt{p}$. Thinking of it as a $\Z_p$ algebra, the ideal of one by one minors of the naive Jacobian matrix $[\frac{\partial}{\partial x}(x^2-2)]$ is $(2x)$. So if the Jacobian criterion held, $(x)$ would be a singular prime, which it is not. 

Suppose $R = S/I$ is equidimensional where $S = \Z_p[x_1, \ldots, x_n]$ and $I = (f_1, \ldots, f_t)$ with $p \not\in I$. Let $X = \Spec R$. If $\p \in X$ such that $p \not\in \p$, then $R_{\p}$ has $p$ inverted, whence we reduce to the case of an algebra over the field $\Q_p = \Z_p[p^{-1}]$. So the usual Jacobian criterion suffices. Hence \[\Sing R = (\Sing R \setminus V(p)) \cup (\Sing R \cap V(p)).\]The first component is equal to ${\V(I_h(\J(\f))) \setminus \V(p)}$ so the problem is to compute the component ${\Sing R \cap V(p)}$. For every prime $\p$ in this component, $R_{\p}$ is a ring such that char $R_{\p} = 0$ but char $ R_{\p}/\p R_{\p} = p$, so we are in the mixed characteristic proper. Due to the recent results of Hochster-Jeffries \cite{Hochster/Jeffries:2021} and Saito \cite{Saito:2022} independently, we can now compute $\Sing R \cap V(p)$. The new ingredient is what is called a $p$-derivation. Going mod $p$, $S$ reduces to $\F_p[x_1, \ldots, x_n]$. The Frobenius on $S/pS$ is uniquely determined by the assignment $x_i \mapsto x_i^p$ for all $i$. Therefore, for a polynomial $f(x_1, \ldots, x_n) \in S$, $f(x_1^p, \ldots, x_n^p) - f(x_1, \ldots, x_n)^p$ is divisible by $p$. Since $S$ is also $p$-torsion free, $\delta_p : S \to S$ defined below is a well-defined function: \[ \delta_{p}(f(x_1, \ldots, x_n)) = \frac{f(x_1^p, \ldots, x_n^p) - f(x_1, \ldots, x_n)^p}{p}. \]
The key property of this operator is that it decreases the $p$-adic order of an element. By the formula for $\delta_p$ given above, for an element $s \in (p^n)$, one can check that $\delta_p(s) \in (p^{n-1})\setminus (p^n)$. In this sense, $\delta_p$, called a $p$-derivation, acts like a ``derivative with respect to $p$" (see also \cite{DeStefani/Grifo/Jeffries:2020, DeStefani/Grifo/Jeffries:2022}). Now one defines the following \textit{mixed} Jacobian matrix.  \[ \widetilde \J(\mathbf{f})) = \begin{pmatrix}
\delta_p(f_1) & \ldots & \delta_p(f_t) \\
\displaystyle(\frac{ \partial f_1}{\partial x_1})^p &\ldots & \displaystyle (\frac{ \partial f_t}{\partial x_1})^p \\
\vdots & \ddots &\vdots \\
\displaystyle (\frac{ \partial f_1}{\partial x_n})^p&\ldots & \displaystyle (\frac{\partial f_t}{\partial x_n})^p
\end{pmatrix}. \]
\noindent The recent results of Hochster-Jeffries and Saito states that \[ \Sing R \cap \V(p) = \V(I_{h}(\widetilde \J(\mathbf{f}))) \cap \V(p). \]Armed with this result, let us now go back to the example $V = \Z_p[x]/(x^2-p)$. One can now compute that the ideal of one by one minors of the mixed Jacobian matrix is the unit ideal whence $\Sing V \cap \V(p) = \varnothing$ as it ought to be. Furthermore, this mixed Jacobian criterion of Hochster-Jeffries and Saito generalizes to all finitely generated, equidimensional $V$-algebras where $V$ is an unramified discrete valuation ring of mixed characteristic. This is due to the fact that all unramified discrete valuation rings admit a $p$-derivation modulo $p^2$ (see \cite[Proposition 2.9]{Hochster/Jeffries:2021}) which suffices to state and prove a mixed Jacobian criterion. 

In contrast, if $V$ is a ramified discrete valuation ring, $V$ cannot admit a $p$-derivation modulo $p^2$, so the theorem of Hochster-Jeffries and Saito cannot be directly extended to algebras of finite type over a ramified discrete valuation ring of mixed characteristic; see \cref{HJwork} and \cite[Example 2.5]{DeStefani/Grifo/Jeffries:2020}. In this paper, we state and prove a Jacobian criterion for such rings. Assume  $V$ is a ramified discrete valuation ring of mixed characteristic $(0, p)$ with a perfect residue field $k$ and set $S = V[x_1, \ldots, x_n]$.  Our key observation is that the ramified discrete valuation rings admit an honest $\Z$-derivation that plays an analogous role to $\delta_p(-)$ in the ramified setting. More precisely, we show that there exists an additive derivation $ \frac{\partial}{\partial \pi} : S \to S/\pi S$ such that $\frac{\partial(\pi)}{\partial \pi} = 1$; see Corollary \ref{d/dpi}. Analogously, we then define the following \textit{mixed} Jacobian matrix over $S/\pi S$: \[ \J^{\pi}(\mathbf{f}) = \begin{pmatrix}
\frac{\partial (f_1)}{\partial \pi} & \ldots & \frac{\partial (f_t)}{\partial \pi} \\[0.1cm]
\frac{ \partial f_1}{\partial x_1} &\ldots & \frac{ \partial f_t}{\partial x_1} \\
\vdots & \ddots &\vdots \\
\frac{ \partial f_1}{\partial x_n}&\ldots & \frac{\partial f_t}{\partial x_n}
\end{pmatrix}. \]

\begin{introthm} (\cref{JacobianCriterion})
    Suppose $(V, \pi)$ is a ramified discrete valuation ring of mixed characteristic $(0, p)$ with perfect residue field. Let $S = V[x_1, \ldots, x_n]$ with an ideal $I = (\mathbf{f})= (f_1, \ldots, f_t) \subseteq S$. Assume $R = S/I$ is equidimensional and let $h = \height I$. Then \[ \Sing R \cap \V(p) = \V(I_{h}( \J^{\pi}(\mathbf{f}))) \subseteq \Spec(R/(\pi) R)\] where we identify $V(\pi) = V(p) \subseteq \Spec R$ with $\Spec(R/(\pi) R)$ and $I_h(\J^{\pi}(\mathbf{f})))$ denotes the ideal generated by $h \times h$ minors of $ \J^{\pi}(\mathbf{f})$ in $R/(\pi)R$. 
\end{introthm}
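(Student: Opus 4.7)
The plan is to fix $\p \in \V(p) \cap \Spec R$, let $\q \in \Spec S$ be its preimage under $S \twoheadrightarrow R$ (so $\pi \in \q$), and reduce to showing that $R_\p = S_\q/I_\q$ is regular if and only if $\rank \J^\pi(\f)|_\p = h$. Since $S_\q$ is a regular local ring and the equidimensionality of $R$ together with the catenarity of $S$ forces $\height I_\q = h$, the standard regularity criterion for quotients of a regular local ring reduces the problem to showing that the image of $\{f_1, \ldots, f_t\}$ in $\q S_\q/\q^2 S_\q$ spans a $\kappa(\p)$-subspace of dimension $h$.

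To translate this spanning condition into a Jacobian rank, I would use the $\Z$-derivation $\partial/\partial \pi$ from Corollary \ref{d/dpi} together with the usual partials $\partial/\partial x_i$ to define a $\kappa(\p)$-linear map
\[ \Phi \colon \q S_\q/\q^2 S_\q \longrightarrow \kappa(\p)^{n+1}, \qquad f \longmapsto \Bigl(\tfrac{\partial f}{\partial \pi}(\p),\, \tfrac{\partial f}{\partial x_1}(\p),\, \ldots,\, \tfrac{\partial f}{\partial x_n}(\p)\Bigr). \]
Well-definedness and $\kappa(\p)$-linearity follow from the Leibniz rule and the fact that elements of $\q$ evaluate to zero at $\p$. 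By construction $\Phi(f_i)$ is the $i$-th column of $\J^\pi(\f)$ evaluated at $\p$, so the dimension of the image of $I_\q$ under the composition $I_\q/\q I_\q \to \q S_\q/\q^2 S_\q \xrightarrow{\Phi} \kappa(\p)^{n+1}$ is exactly $\rank \J^\pi(\f)|_\p$.

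The crux, and main obstacle, is proving that $\Phi$ is injective. I would pick a regular system of parameters $\pi, g_1, \ldots, g_m$ for $S_\q$ where $g_1, \ldots, g_m$ lift a regular system of parameters of $\bar S_{\bar \q}$, and then replace each $g_j$ by $g_j - c_j \pi$ with $c_j \in S$ chosen so that $c_j(\p) = (\partial g_j/\partial \pi)(\p)$; this preserves the property of being a regular system of parameters but arranges that $\Phi(g_j)$ has vanishing first coordinate. It then suffices to show that the truncated vectors $((\partial g_j/\partial x_i)(\p))_{i=1}^n$ are linearly independent in $\kappa(\p)^n$. This is the content of the classical Jacobian criterion applied to $\bar S_{\bar \q}/(\bar g_1, \ldots, \bar g_m) = \kappa(\bar \q)$: perfectness of $k$ makes the finitely generated extension $\kappa(\bar \q)/k$ separably generated, which is precisely the hypothesis needed to conclude that the $m \times n$ Jacobian $(\partial g_j/\partial x_i)$ has rank $m$ at $\bar \q$. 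Together with $\Phi(\pi) = (1, 0, \ldots, 0)$, this yields the injectivity of $\Phi$.

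Combining injectivity of $\Phi$ with the criterion from the first paragraph, $R_\p$ is regular if and only if $\rank \J^\pi(\f)|_\p = h$, i.e., $\p \notin \V(I_h(\J^\pi(\f)))$. Since $\V(I_h(\J^\pi(\f)))$ is by definition contained in $\Spec(R/\pi R) = \V(\pi) = \V(p)$, the intersection with $\V(p)$ on the right is automatic, yielding the desired equality $\Sing R \cap \V(p) = \V(I_h(\J^\pi(\f)))$.
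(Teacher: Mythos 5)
Your proposal is correct and proves the statement, but it takes a genuinely different route from the paper's. Both proofs reduce to showing that the conormal map $\q S_\q/\q^2 S_\q \to k(\q)\otimes\overline\Omega_{S/\Z}$ (your $\Phi$) is injective, and both then invoke the standard fact that for a regular local ring $A$ and an ideal $J$ of height $h$, $A/J$ is regular if and only if the conormal image of $J$ has rank $h$. The difference lies in how injectivity of the conormal map is established. The paper first proves (Proposition~\ref{thm1}) that an algebraic splitting of $S_\q/\q^2 S_\q \onto S_\q/\q S_\q$ forces the conormal map to be split injective, and then produces that splitting by applying Cohen's structure theorem to the artinian local ring $S_\q/\q^2 S_\q$ (Proposition~\ref{mainEx}); the derivation $\partial/\partial\pi$ and the identification $\overline\Omega_{S/\Z}\cong\Omega_{\overline S/k^p}\oplus\overline S\,d\pi$ (Corollary~\ref{equivUniversalMaps}) then translate the result into the matrix $\J^\pi(\mathbf f)$. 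You instead construct a tailored regular system of parameters $\pi,g_1-c_1\pi,\ldots,g_m-c_m\pi$ that block-triangularizes $\Phi$ and reduce injectivity to the classical Jacobian criterion for $\bar S/\bar\q$ over the perfect field $k$. This is more explicit and elementary, but it commits you to the perfect residue field hypothesis: the paper's Corollary~\ref{JacobianCriterion} and Definition~\ref{mixedJacobianMatrix} handle arbitrary (even just $F$-finite, or general) residue fields by adjoining $p$-base rows $\partial/\partial\lambda$, precisely to cover the case where $k(\bar\q)/k$ fails to be separably generated.

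Two minor points worth tightening. First, the adjustment $g_j\mapsto g_j-c_j\pi$ requires $c_j$ whose image in $\kappa(\p)$ equals $(\partial g_j/\partial\pi)(\p)$; since $S\to\kappa(\p)$ is generally not surjective you should take $c_j\in S_\q$ rather than $c_j\in S$, which is harmless because the whole computation is local at $\q$. Second, your final ``i.e.'' step silently uses that $\rank\J^\pi(\mathbf f)|_\p\le h$ for all $\p\supseteq I$ (so that $\rank=h$ is equivalent to $\p\not\supseteq I_h(\J^\pi(\mathbf f))$); this does follow from the injectivity of $\Phi$ together with the regular-local-ring lemma (or from the explicit computation in the paper's Remark~\ref{rank<=h}), but it deserves to be stated rather than absorbed into an ``i.e.''.
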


\noindent For $R$ as in the theorem above, set $\overline R = R/(\pi)R$. We will show that this mixed Jacobian matrix above is a presentation for the universal module, denoted $\overline \Omega_{R/\Z}$, that represents the functor $\Der_{\Z}(R, -): \overline R\text{-Mod} \to \overline R\text{-Mod}$. Below, we say a field $k$ of characteristic $p > 0$ is $F$-finite if the Frobenius map on $k$ is a finite map. 

\begin{introthm} (\cref{non-free locus})
    Let $(V, \pi)$ be a ramified discrete valuation ring of mixed characteristic $(0, p)$ with uniformizer $\pi$ such that $k = V/\pi V$ is $F$-finite. Suppose $(R, \m, K)$ is a local algebra of essentially finite type over $V$ such that $p \in \m$. Set $\overline R = R/(\pi)R$ and $\overline \Omega_{R/\Z} = \Omega_{R/\Z}/(\pi)\Omega_{R/\Z}$. Then $R$ is a regular local ring if, and only if, $\overline \Omega_{R/\Z}$ is a free $\overline R$ module of rank $\dim R + \log_p[K : K^p]$. 
\end{introthm}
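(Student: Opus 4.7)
The plan is to fix a presentation $R = A/J$ where $A = V[x_1,\ldots,x_n]_{\q}$ is a regular local ring of dimension $N$, and to analyze the right-exact conormal sequence
\[
J/J^2 \otimes_R \overline R \xra{\alpha} \overline\Omega_{A/\Z}\otimes_{\overline A} \overline R \to \overline\Omega_{R/\Z}\to 0.
\]
The first task is the base case. Using the smoothness splitting $\Omega_{A/\Z} \cong (\Omega_{V/\Z}\otimes_V A)\oplus \bigoplus_i A\,dx_i$ together with the fact that $\Omega_{V/\Z}/\pi\Omega_{V/\Z}$ is $k$-free of rank $1+\log_p[k:k^p]$---with basis $d\pi,\,dB_1,\ldots,dB_e$ where the $B_j$ lift a $p$-basis of $k$, linear independence witnessed by dual $\Z$-derivations $V\to k$ built as in \cref{d/dpi}---I get $\overline\Omega_{A/\Z}\cong \overline A^{n+1+\log_p[k:k^p]}$. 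Combining Kunz's formula $\log_p[K:K^p]=\log_p[k:k^p]+\operatorname{trdeg}_k K$ (valid since $k$ is $F$-finite and $K/k$ is finitely generated because $p\in\m$) with the catenary equality $\dim A=n+1-\operatorname{trdeg}_k K$ yields the crucial identity
\[
n+1+\log_p[k:k^p]=\dim A+\log_p[K:K^p],
\]
so that $\overline\Omega_{A/\Z}\otimes_{\overline A}\overline R$ is a free $\overline R$-module of rank $\dim A+\log_p[K:K^p]$.

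The second key input is injectivity of the map $\m_A/\m_A^2\to \Omega_{A/\Z}\otimes_A K$. Since $\dim_K \Omega_{A/\Z}\otimes_A K=\dim A+\log_p[K:K^p]$ by the base case, and $\dim_K \Omega_{K/\Z}=\log_p[K:K^p]$ because $K$ is $F$-finite of characteristic $p$, the right-exact sequence
\[
\m_A/\m_A^2\to\Omega_{A/\Z}\otimes_A K\to\Omega_{K/\Z}\to 0
\]
forces the image of $\m_A/\m_A^2$ to have $K$-dimension $\dim A=\dim_K\m_A/\m_A^2$, so the map is injective. This lets me translate linear independence in $\Omega_{A/\Z}\otimes_A K$ back to linear independence in the cotangent space of $A$.

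With these in hand, the forward direction is short: if $R$ is regular then $J=(f_1,\ldots,f_{N-d})$ with the $f_i$'s part of a regular system of parameters of $A$, so their images in $\m_A/\m_A^2$ are $K$-linearly independent; by the injection above, $\overline{df_1},\ldots,\overline{df_{N-d}}$ are $K$-linearly independent in $\overline\Omega_{A/\Z}\otimes_{\overline A}K$. Nakayama's lemma extends them to an $\overline R$-basis of $\overline\Omega_{A/\Z}\otimes_{\overline A}\overline R$, so $\operatorname{im}(\alpha)$ is a free direct summand of rank $N-d$ and $\overline\Omega_{R/\Z}$ is free of rank $(n+1+\log_p[k:k^p])-(N-d)=d+\log_p[K:K^p]$. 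For the converse, freeness of $\overline\Omega_{R/\Z}$ of this rank forces $\operatorname{im}(\alpha)$ to be a free direct summand of complementary rank $N-d$; extracting a minimal generating set of this summand from $J$ and tracing backwards through the injection of the previous paragraph produces $f_1,\ldots,f_{N-d}\in J$ that are part of a regular system of parameters of $A$. Then $A/(f_1,\ldots,f_{N-d})$ is a regular local domain of dimension $d$ admitting $R=A/J$ as a quotient of the same dimension, forcing $J=(f_1,\ldots,f_{N-d})$, whence $R$ is regular.

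The main obstacle I anticipate is the base-case assertion on $\Omega_{V/\Z}/\pi\Omega_{V/\Z}$: since $V$ need not be finitely generated over $\Z$ and $\Omega_{V/\Z}$ itself is typically not finitely generated, proving actual freeness (not merely an upper bound on the number of generators) of its reduction modulo $\pi$ demands producing $\Z$-derivations $V\to k$ dual to each of $d\pi, dB_1,\ldots,dB_e$. This hinges on the $F$-finiteness of $k$ and on extending the construction behind \cref{d/dpi} from the single derivation $\partial/\partial\pi$ to the family $\{\partial/\partial B_j\}$ attached to a lifted $p$-basis.
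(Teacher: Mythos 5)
Your proposal is correct in its essentials and takes a genuinely different route from the paper's argument, though it shares the same foundational computations. The paper's proof of \cref{non-free locus} pivots on two pieces of machinery it has already set up: the mixed Jacobian matrix $\J^{\pi}(\mathbf{f})$ as a presentation for $\overline\Omega_{R/\Z}$ (via \cref{2ndseq} and \cref{equivUniversalMaps}), and the Fitting-ideal characterization of local freeness, which it then translates into the rank condition $\rank(\J^{\pi}\otimes k(\p))=\height I$; the equivalence of this rank condition with regularity is exactly \cref{JacobianSees}, which itself rests on the injectivity $\q S_{\q}/\q^2 S_{\q}\hookrightarrow k(\q)\otimes\overline\Omega_{S/\Z}$ proved in \cref{cor1} via the Cohen-structure-theorem splitting of $S_{\q}/\q^2 S_{\q}\twoheadrightarrow S_{\q}/\q S_{\q}$ (\cref{mainEx}). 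You bypass the Fitting-ideal and Jacobian-rank packaging entirely: you establish the same injectivity of $\m_A/\m_A^2\to\Omega_{A/\Z}\otimes_A K$ by a pure dimension count in the conormal sequence (using $\dim_K\Omega_{K/\Z}=\log_p[K:K^p]$ on the right, which is the Kunz/$F$-finiteness input), and then run a hands-on Nakayama and regular-sequence argument in both directions. What each approach buys: the paper's version is engineered to produce the explicit mixed Jacobian matrix and the closed-form singular locus (the content of \cref{JacobianCriterion}), so deriving the module-theoretic statement from that machinery is the natural order; yours gives a more classical and self-contained module-theoretic proof that never needs to pass through determinantal ideals. Both ultimately rest on the same new input --- the freeness of $\overline\Omega_{V/\Z}$ (equivalently $\overline\Omega_{S/\Z}\cong\Omega_{\overline S/k^p}\oplus\overline S\,d\pi$, \cref{equivUniversalMaps}), which you correctly flag as the real crux. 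Your plan to build the dual $\Z$-derivations $\partial/\partial\pi$ and $\partial/\partial B_j$ matches the paper's \cref{d/dpi} and the lift of the inseparability derivations used in \cref{universalDer}; once that is in place, the dimension count $n+1+\log_p[k:k^p]=\dim A+\log_p[K:K^p]$ is exactly the paper's \cref{calculation} rephrased via the transcendence-degree form of Kunz's formula, and the rest of your argument goes through.
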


\begin{ack}
    We thank Melvin Hochster and Jack Jeffries for providing us key insights for this work. We are also grateful to Mark Walker and  Elo\'isa Grifo for fruitful discussions, and the referee for many suggestions. The author was supported through NSF grants DMS-2044833 and DMS-2200732. This material is also based upon work supported by the National Science Foundation under Grant No. DMS-1928930 and by the Alfred P. Sloan Foundation under grant G-2021-16778, while the author was in residence at the Simons Laufer Mathematical Sciences Institute (formerly MSRI) in Berkeley, California, during the Spring 2024 semester.
\end{ack}

\section{Detecting nonsingularity} \label{sec1}

\begin{chunk}
    \textit{Some recollections on derivations:} Fix a pair of commutative rings ${A \subseteq R}$ with $1$ and an $R$-module $M$. An $A$-linear map $d : R \to M$ such that ${d(rs) = rd(s) + sd(r)}$ for all $r, s \in R$ is an $A$-derivation from $R$ to $M$. The set of all such derivations, denoted $\Der_A(R, M)$, is an $R$-module under the rules ${(d+d')(r) = d(r) + d'(r)}$ and ${(r\cdot d)(s) = rd(s)}$ for $d, d'$ derivations and $r, s \in R$. Furthermore, note that given an $R$-linear map $f: M \to N$, we have an $R$-linear map ${\Der_A(R, M) \to \Der_A(R, N)}$ via post-composition, i.e. ${d \mapsto f \circ d}$. This defines an additive functor \[ {\Der_A(R, -) : R\text{-Mod} \to R\text{-Mod}}\] and it is representable. We have the universal $A$-derivation ${d_{R/A}: R \to \Omega_{R/A}}$ from $R$ to the $R$-module of K\"ahler differentials $\Omega_{R/A}$ which induces a natural isomorphism of functors $\Hom_R(\Omega_{R/A}, -)\cong \Der_A(R, -) $  where $(f: \Omega_{R/A} \to M)$ maps to a derivation $ {(f \circ d_{R/A}: R \to M)}$ \cite[Chapter 16]{Eisenbud:1995}. 

\begin{example} \label{polyex}
    Suppose $R = A[x_1, \ldots, x_n]$. Then we have $\Omega_{R/A} = \bigoplus_{i=1}^n R \cdot dx_i$ and $d_{R/A}: R \to \Omega_{R/A}$ is the derivation $f \mapsto \sum_{i=1}^n \frac{\partial(f)}{\partial x_i}dx_i$ \cite[Proposition 16.1]{Eisenbud:1995}. 
\end{example}

\end{chunk}
\begin{chunk}

\noindent \textit{Derivations to modules killed by an ideal: } Suppose $R$ is a commutative ring and we have some ideal ${\mathfrak a} \subseteq R$. We want to consider $A$-linear derivations to $R$-modules that are killed by ${\mathfrak a}$. This turns out to be useful in computing ${\Sing(R) \cap \V({\mathfrak a})}$. Let $ \overline{R} = R/{\mathfrak a}$ and $\overline \Omega_{R/A} = \overline{R} \otimes_R \Omega_{R/A} \cong \Omega_{R/A}/{\mathfrak a}\Omega_{R/A}$. If $M$ is an $R$-module such that $\ma M =0$, via Hom-Tensor adjunction along $R \to \overline R$, we have isomorphisms $\Der_A(R, M) \cong \Hom_R(\Omega_{R/A}, M) \cong \Hom_{\overline R}(\overline{R} \otimes_R \Omega_{R/A}, M)$. In other words, \[ \overline d_{R/A} : R \xrightarrow{d_{R/A}} \Omega_{R/A} \to \overline \Omega_{R/A} \] represents the functor \[ \Der_A(R, -) : \overline R\text{-Mod} \to \overline R\text{-Mod}. \]Note also that for any derivation $d: R \to M$ we have $d({\mathfrak a}^n) \subseteq \mathfrak{a}^{n-1}M$ for all $n \geq 1$. If ${\mathfrak a} M = 0$, $d$ factors through $R/{\mathfrak a}^2$. So $\Der_A(R, M) = \Der_A(R/{\mathfrak a}^2, M)$ for all $R$-modules such that ${\mathfrak a} M = 0$. Therefore, $\overline d_{R/A}$ equivalently represents the functor $\Der_A(R/{\mathfrak a}^2, -): \overline R\text{-Mod} \to \overline R\text{-Mod}$.

\begin{example}
    If $\Z \subseteq R = \Z[x_1, \ldots, x_n]$, then ${\Omega_{R/\Z} \cong \bigoplus_{i=1}^n R\cdot dx_i}$ by Example \ref{polyex}. Suppose $\ma = (p)$ for some prime integer $p \in \Z$. Then ${\overline \Omega_{R/\Z} \cong \bigoplus_{i=1}^n \overline {R}\cdot dx_i}$ and ${\overline d_{R/\Z} (f) = \sum_{i=1}^n \frac{\partial f}{\partial x_i} dx_i}$ modulo $(p)$. Note ${\overline{R} = \F_p[x_1, \ldots, x_n]}$ has the same universal derivation. Therefore, when $\ma = (p)$ derivations from $R$ to $\overline R$-modules is the same as derivations from $\overline R$ itself. The situation is different when ${\mathfrak a} = (x_1)$. We have $\overline \Omega_{R/\Z} \cong \bigoplus_{i=1}^n  \overline R \cdot dx_i$. But $\Omega_{\overline R / \Z} = \bigoplus_{i=2}^n \overline R \cdot dx_i$ as  $\overline R = \Z[x_2, \ldots, x_n]$. So $\overline \Omega_{R/\Z} \not\cong \Omega_{\overline{R}/\Z}$. We have strictly more derivations from $R$ to $\overline R$ modules compared to derivations from $\overline R$ itself. 
\end{example}
\noindent Our proofs crucially use the following basic and fundamental result on derivations. For the convenience of the reader, we include a proof. 
\begin{proposition}\label{thm1}
   Consider rings maps $A \to S \onto S/I$.  For all $S/I$-modules $M$, we have a left exact sequence \[ 0 \to \Der_A(S/I, M) \to \Der_A(S, M) \to \Hom_{S/I}(I/I^2, M) \]This sequence is split exact for all $S/I$-modules $M$ if $S/I^2 \onto S/I$ splits as a map of $A$-algebras.
\end{proposition}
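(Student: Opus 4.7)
My plan is to construct the two maps explicitly and then check exactness in the middle, followed by producing a splitting when the hypothesis on $S/I^2 \onto S/I$ holds. The first map sends a derivation $d : S/I \to M$ to its precomposition with the quotient $\pi : S \onto S/I$; this is clearly $A$-linear, Leibniz, and injective because $\pi$ is surjective. For the second map, I would restrict a given derivation $d : S \to M$ to the ideal $I$. Since $IM = 0$ and $d(xy) = xd(y) + yd(x)$ for $x,y \in I$, this restriction kills $I^2$ and therefore descends to a map $I/I^2 \to M$. The Leibniz rule together with $IM = 0$ also forces this descended map to be $S/I$-linear.

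Exactness in the middle is then immediate: a derivation $d : S \to M$ lies in the kernel of restriction exactly when $d(I) = 0$, which is exactly when $d$ factors through $S/I$ (since $S \to S/I$ is the quotient by $I$); and any such factorization is unique. This gives the left exact sequence.

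For the splitting claim, I would first invoke the identification $\Der_A(S, M) = \Der_A(S/I^2, M)$ recorded in the paragraph preceding the proposition (valid since $IM = 0$, hence $I^2 M = 0$). Given an $A$-algebra splitting $\sigma : S/I \to S/I^2$ of the surjection $S/I^2 \onto S/I$, the ring $S/I^2$ decomposes as an $A$-module as the internal direct sum $\sigma(S/I) \oplus (I/I^2)$ (using that $I/I^2$ is the kernel of $S/I^2 \onto S/I$ and $\sigma$ is a section). Given any $S/I$-linear $\vp : I/I^2 \to M$, I would define $d : S/I^2 \to M$ to be zero on $\sigma(S/I)$ and to agree with $\vp$ on $I/I^2$, extended $A$-linearly. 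This $d$ lifts to a derivation $S \to M$ whose associated map $I/I^2 \to M$ is visibly $\vp$, so once $d$ is confirmed to be a derivation the sequence splits functorially in $M$.

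The main obstacle, and really the only nontrivial calculation, is verifying that this $d$ satisfies the Leibniz rule. One checks the three cases: on two elements of $\sigma(S/I)$ both sides vanish because $\sigma$ is an $A$-algebra map and $d$ is zero there; on two elements of $I/I^2$ the product is zero in $S/I^2$ and the right-hand side is zero because $IM = 0$; and the mixed case $\sigma(\bar s) \cdot x$ with $x \in I/I^2$ reduces to the identity $\vp(\bar s x) = \bar s \vp(x)$, which is exactly the $S/I$-linearity of $\vp$ (using that the $S/I^2$-action on $M$ factors through $S/I$ via $\sigma$ modulo $I$). This completes the construction of a natural right inverse and hence the split exactness.
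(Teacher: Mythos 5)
Your proof is correct and follows essentially the same approach as the paper: both build the splitting by using the $A$-algebra section $\sigma : S/I \to S/I^2$ to project $S/I^2$ onto $I/I^2$ and then post-compose with the given $S/I$-linear map $\vp$. The only stylistic difference is that the paper packages the derivation property in one stroke, observing that $\iota \circ \sigma - \id_{S/I^2}$ is a derivation because it is a ring homomorphism minus the identity with image in the square-zero ideal $I/I^2$, whereas you verify the Leibniz rule for the same map directly by cases on the internal direct sum $\sigma(S/I) \oplus I/I^2$.
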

     
\begin{proof}
    Suppose $M$ is an $S/I$-module. An $A$-linear derivation $d: S/I \to M$ maps to an $A$-linear derivation $d \circ p: S \to M$ where $p: S \to S/I$ is the natural surjection. Clearly, if $d \circ p = 0$, then $d = 0$. On the other hand, given a derivation $d: S \to M$, we have $d(I^2) \subseteq IM = 0$, so it factors uniquely through $S/I^2$. Therefore, $\Der_A(S, M) = \Der_A(S/I^2, M)$. Restricting the map $d: S/I^2 \to M$ to the submodule $I/I^2$, it is easy to verify it is an $S/I$-linear map. Hence, we have a left exact sequence as claimed. Now assume $\sigma : S/I^2 \onto S/I$ splits as a map of $A$-algebras and let $\iota: S/I \to S/I^2$ denote the splitting, i.e. $\sigma \circ \iota = \id_{S/I}$. Let $\alpha = \iota \circ \sigma - \id_{S/I^2}$. Note that since $\sigma \circ \alpha = 0$, $\alpha$ is an $A$-linear map to $I/I^2$. On the other hand, as $\alpha + \id_{S/I^2} = \iota \circ \sigma$ is a ring homomorphism, one can verify that $\alpha: S/I^2 \to I/I^2$ must be a derivation. Finally, noting $\Der_A(S, M) = \Der_A(S/I^2, M)$, we may now define a map $\phi: \Hom_{S/I}(I/I^2, M) \to \Der_A(S, M)$ where $f \mapsto f \circ -\alpha$. Finally, $\phi$ splits the map $\Der_A(S, M) \to \Hom_{S/I}(I/I^2, M)$ as $(f \circ -\alpha) (I/I^2) = f$. 
\end{proof}

\begin{proposition} \label{2ndseq}
    Consider ring maps $A \to S \onto S/I = R$. Consider an ideal $\ma_S \subseteq S$ and let $\ma\subseteq R$ denote its corresponding image in $R$. Let $\overline S = S/\ma_S$ and $\overline R = R/\ma$ and consider the induced map $\overline S \to \overline R$. There is an exact sequence of $\overline R$-modules \[ \frac{I}{(I^2+ \ma_S I)} \xrightarrow{\J} \overline R \otimes_{\overline S} \overline\Omega_{ S/A} \to \overline\Omega_{ R/A} \to 0\]where $\J([f]) = 1 \otimes \overline d_{S/A}(f)$ for $f \in I$ and $\overline d_{S/A}: S \to \overline \Omega_{S/A}$. Furthermore, if $S/I^2 \onto S/I$ splits as a map of $A$-algebras, this sequence is split exact. 
\end{proposition}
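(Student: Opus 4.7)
The plan is to recognize the claimed sequence as the conormal (second fundamental) sequence for K\"ahler differentials, adapted to the category of $\overline R$-modules, and to deduce it directly from Proposition \ref{thm1} by a Yoneda argument. Concretely, I would fix an arbitrary $\overline R$-module $M$ and apply Proposition \ref{thm1} to the ring maps $A \to S \onto S/I = R$. This produces the left exact sequence
\[ 0 \to \Der_A(R, M) \to \Der_A(S, M) \to \Hom_R(I/I^2, M), \]
which becomes split short exact when $S/I^2 \onto S/I$ splits as $A$-algebras.

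Next, I would rewrite each term as a Hom of a specific $\overline R$-module into $M$. Since any $\overline R$-module $M$ satisfies $\ma_S M = 0$ via $S \to R \to \overline R$, it is naturally an $\overline S$-module. Hom-tensor adjunction together with the universal property of $\Omega_{S/A}$ gives $\Der_A(S, M) \cong \Hom_{\overline R}(\overline R \otimes_{\overline S} \overline\Omega_{S/A}, M)$; the discussion preceding this proposition gives $\Der_A(R, M) \cong \Hom_{\overline R}(\overline \Omega_{R/A}, M)$; and since $I/(I^2 + \ma_S I) = (I/I^2) \otimes_R \overline R$, another adjunction yields $\Hom_R(I/I^2, M) \cong \Hom_{\overline R}(I/(I^2 + \ma_S I), M)$.

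Plugging these identifications in, the natural-in-$M$ left exact sequence of Hom functors corresponds, via the Yoneda lemma, to the claimed right exact sequence of $\overline R$-modules. Checking that the connecting map sends $[f]$ to $1 \otimes \overline d_{S/A}(f)$ amounts to unraveling the adjunctions above applied to $d_{S/A}: S \to \Omega_{S/A}$. Under the splitting hypothesis, Proposition \ref{thm1} promotes the Hom sequence to a split short exact sequence, and Yoneda converts this back into a split short exact sequence of the representing $\overline R$-modules.

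The main obstacle is the bookkeeping around the identifications — particularly verifying that $\overline R \otimes_{\overline S} \overline \Omega_{S/A}$ genuinely represents $\Der_A(S, -)$ on $\overline R$-Mod, and that the Yoneda-induced map agrees with the explicit formula $\J([f]) = 1 \otimes \overline d_{S/A}(f)$. Both are routine chases through the adjunctions, but worth spelling out to keep the argument self-contained.
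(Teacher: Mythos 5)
Your proposal matches the paper's own argument: apply Proposition \ref{thm1} for an arbitrary $\overline R$-module $M$, use the universal property of K\"ahler differentials and Hom-tensor adjunction to rewrite each term as $\Hom_{\overline R}(-, M)$ of the respective module, and then invoke Yoneda (the paper phrases it as ``since this holds for all $\overline R$-modules $M$'') to obtain the right-exact sequence of representing objects, with the splitting case handled the same way. This is correct and essentially the same route.
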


\begin{proof}
   Given an $\overline R$ module $M$, we have by Proposition \ref{thm1}, an exact sequence $0 \to \Der_A(R, M) \to \Der_A(S, M) \to \Hom_R(I/I^2, M)$. If $M$ is an $\overline R$-module, $\ma M = 0$ and consequently $\ma_S M = 0$. Then we have by the universality of K\"ahler differentials and Hom-Tensor adjunction the following commutative diagram: \[\begin{tikzcd}
	0 & {\Der_A(R, M)} & {\Der_A(S, M)} & {\Hom_R(I/I^2, M)} \\
	0 & {\Hom_{\overline R}(\overline \Omega_{R/A}, M)} & {\Hom_{\overline R}(\overline R \otimes_{\overline S}\overline \Omega_{S/A}, M)} & {\Hom_{\overline R}(I/(I^2+\ma_S I), M)}
	\arrow[from=1-1, to=1-2]
	\arrow[from=1-2, to=1-3]
	\arrow[from=1-3, to=1-4]
	\arrow[from=2-1, to=2-2]
	\arrow[from=2-2, to=2-3]
	\arrow[from=2-3, to=2-4]
	\arrow["\cong", from=1-2, to=2-2]
	\arrow["\cong", from=1-3, to=2-3]
	\arrow["\cong", from=1-4, to=2-4]
\end{tikzcd}\]
Since this holds for all $\overline R$-modules $M$, we have an exact sequence of $\overline R$-modules as claimed. Furthermore, if  $S/I^2 \onto S/I$ splits as a map of $A$-algebras, then by Proposition \ref{thm1} the upper sequence in the diagram is split exact, and hence the exact sequence we get is also split exact. 
\end{proof}

\begin{corollary} \label{cor1}
    Consider an inclusion $A \subseteq S$ of rings and let $\ma_S$ be some ideal of $S$. Set $\overline S = S/ \ma_S$. Then for $\q \in \Spec S$ such that $\q \supseteq \ma_S$, we have a map of $k(\q) = S_{\q}/\q S_{\q}$ modules \[ \frac{\q S_{\q}}{\q^2S_{\q}} \xrightarrow{1 \otimes \overline d_{S/A}} k(\q) \otimes_{\overline S} \overline \Omega_{S/A}\] where $[f] \mapsto 1 \otimes \overline d_{S/A}(f)$ for $f \in \q$. Furthermore, if $S_{\q}/\q^2 S_{\q} \onto S_{\q}/\q S_{\q}$ splits as a map of $A$-algebras, this map is split injective. 
\end{corollary}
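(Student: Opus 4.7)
I would deduce this corollary by applying Proposition~\ref{2ndseq} to the chain of $A$-algebra maps $A \to S_{\q} \twoheadrightarrow S_{\q}/\q S_{\q} = k(\q)$, taking the role of the ideal $\ma_S$ in Proposition~\ref{2ndseq} to be $\ma_S S_{\q}$ and the role of $I$ to be $\q S_{\q}$. The proposition then produces an exact sequence
\[ \frac{\q S_{\q}}{\q^2 S_{\q} + \ma_S S_{\q} \cdot \q S_{\q}} \xrightarrow{\J} k(\q)\otimes_{\overline{S_{\q}}} \overline{\Omega}_{S_{\q}/A} \to \overline{\Omega}_{k(\q)/A} \to 0, \]
and since the hypothesis $\q \supseteq \ma_S$ gives $\ma_S S_{\q}\cdot \q S_{\q} \subseteq \q^2 S_{\q}$, the denominator collapses to $\q^2 S_{\q}$ and the source becomes $\q S_{\q}/\q^2 S_{\q}$, as desired.

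Next, I would identify $k(\q)\otimes_{\overline{S_{\q}}} \overline{\Omega}_{S_{\q}/A}$ with the object $k(\q)\otimes_{\overline{S}} \overline{\Omega}_{S/A}$ appearing in the statement. This is a routine base-change computation: since Kähler differentials commute with localization we have $\Omega_{S_{\q}/A} \cong S_{\q}\otimes_S \Omega_{S/A}$, and reducing modulo $\ma_S S_{\q}$ together with the standard cancellation $k(\q)\otimes_{\overline{S_{\q}}} \overline{S_{\q}}\otimes_{\overline{S}}(-) \cong k(\q)\otimes_{\overline{S}}(-)$ gives the identification. Under this identification the connecting map $\J$ sends the class of $f \in \q$ to $1\otimes \overline{d}_{S/A}(f)$, which is exactly the formula claimed in the corollary.

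Finally, the ``furthermore'' clause is immediate: the hypothesis that the surjection $S_{\q}/\q^2 S_{\q} \twoheadrightarrow S_{\q}/\q S_{\q}$ splits as a map of $A$-algebras is precisely the splitting hypothesis of Proposition~\ref{2ndseq} in this situation, so the sequence above is split exact, and in particular the displayed map is split injective.

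The only step that is not wholly formal is the base-change identification in the second paragraph, but this is nothing more than associativity and cancellation of tensor products combined with localization of $\Omega$; there is no real obstacle here, just notational bookkeeping.
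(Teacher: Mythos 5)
Your argument is correct and follows essentially the same route as the paper: apply Proposition~\ref{2ndseq} to the localized chain $A \to S_{\q} \twoheadrightarrow k(\q)$ and then identify $k(\q)\otimes_{\overline{S_{\q}}}\overline\Omega_{S_{\q}/A}$ with $k(\q)\otimes_{\overline S}\overline\Omega_{S/A}$ via localization of K\"ahler differentials and cancellation of tensor products. If anything you are a bit more explicit than the paper, in particular in spelling out that $\ma_S S_{\q}\cdot \q S_{\q}\subseteq \q^2 S_{\q}$ (because $\ma_S\subseteq\q$) is what makes the source collapse to $\q S_{\q}/\q^2 S_{\q}$, a detail the paper leaves implicit.
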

\begin{proof}
    Applying Proposition \ref{2ndseq} to maps $A \to S_{\q} \to S_{\q}/\q S_{\q}$, we have a map \[ \q S_{\q}/\q^2 S_{\q} \to k(\q) \otimes_{\overline S} \overline \Omega_{S_{\q}/A} \] where $[f] \mapsto 1 \otimes \overline d_{S_{\q}/A}(f)$ and it is split injective if $S_{\q}/\q^2 S_{\q} \onto S_{\q}/\q S_{\q}$ splits as a map of $A$-algebras. Now note that $k(\q) \otimes_{\overline S} \overline \Omega_{S/A} \cong k(\q) \otimes_{\overline S_{\q}} \overline S_{\q} \otimes_{\overline S} \overline \Omega_{S/A} \cong k(\q) \otimes_{\overline S_{\q}} \overline \Omega_{S_{\q}/A}$ and the universal map $\overline d_{S_{\q}/A}$ is the universal map $\overline d_{S/A}$ localized at $\q$. Therefore the assertion follows. 
\end{proof}
\end{chunk}

\begin{chunk}

\noindent {\em Detecting singularity:} 

\begin{setup} \label{setup}
    Fix a ring $A$ and let $V$ be some $A$-algebra and $S = V[x_1, \ldots, x_n]$. Given an ideal $\ma_S \subseteq S$, we will assume that for all prime ideals $\q $ containing $\ma_S$ the following two conditions are met. 
\begin{enumerate}
    \item $S_{\q}$ is a regular local ring. 
    \item $\frac{S_{\q}}{{\q}^2S_{\q}} \onto \frac{S_{\q}}{{\q}S_{\q}}$ splits as a map of $A$-algebras.
\end{enumerate}
\end{setup}

\begin{example}
    If $A = V = k$ is a perfect field, then $S = k[x_1, \ldots, x_n]$ satisfies the conditions of Setup \ref{setup} for any ideal $\ma_S  \subseteq S$.  The main example in this paper is when $A = \Z$ and $V$ is a ramified discrete valuation ring of mixed characteristic $(0, p)$; see Proposition \ref{mainEx} which we will discuss in Section \ref{ramSec} below. 
\end{example}

\begin{theorem} \label{detectingNonsingularity}
    Suppose $S$ is as in the Setup \ref{setup} above and let $R = S/I$ for some ideal $I \subseteq S$. Suppose $\q \in \Spec S$ such that $\q \supseteq I + \ma_S$ and let $k(\q) := S_{\q}/\q S_{\q}$. Then $R_{\q}$ is a regular local ring if, and only if, $\rank_{k(\q)}(\J \otimes_{\overline R} k(\q)) = \height IS_{\q}$ where $\J$ is the $\overline R$-linear map $\frac{I}{I^2+I\ma} \xhookrightarrow{\J} \overline R \otimes_{\overline S} \overline \Omega_{S/A}$ as in Proposition \ref{2ndseq}. 
\end{theorem}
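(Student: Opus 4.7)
The plan is to reduce to the standard criterion characterizing when a quotient of a regular local ring is itself regular, and then transport that criterion from $\q S_{\q}/\q^2 S_{\q}$ into the module of differentials via \cref{cor1}. Concretely, since $S_{\q}$ is a regular local ring containing $IS_{\q}$, the ring $R_{\q} = S_{\q}/IS_{\q}$ is regular if and only if $IS_{\q}$ is generated by part of a regular system of parameters of $S_{\q}$. A short computation with $\dim R_{\q} = \dim S_{\q} - \height IS_{\q}$ and $\edim R_{\q} = \dim S_{\q} - \dim_{k(\q)}\!\bigl(\mathrm{image}(IS_{\q} \to \q S_{\q}/\q^2 S_{\q})\bigr)$ reformulates this as the numerical equality
\[
\dim_{k(\q)}\mathrm{image}\bigl(IS_{\q} \xrightarrow{\text{incl.}} \q S_{\q}/\q^2 S_{\q}\bigr) = \height IS_{\q}.
\]

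The next step is to turn the left-hand side into the rank of $\J \otimes_{\overline R} k(\q)$. By hypothesis \cref{setup}\,(2), \cref{cor1} applies and produces a split injection $\q S_{\q}/\q^2 S_{\q} \hookrightarrow k(\q)\otimes_{\overline S}\overline{\Omega}_{S/A}$ sending $[f] \mapsto 1\otimes \overline{d}_{S/A}(f)$. In particular, the dimension of the image of $IS_{\q}$ in $\q S_{\q}/\q^2 S_{\q}$ equals the dimension of the image of $IS_{\q}$ under the composite map $IS_{\q} \to k(\q)\otimes_{\overline S}\overline{\Omega}_{S/A}$, $f \mapsto 1\otimes \overline{d}_{S/A}(f)$.

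Finally, I would identify this latter image with the image of $\J\otimes_{\overline R} k(\q)$. Since $\ma_S I \subseteq I^2 + \ma_S I$, the module $I/(I^2 + \ma_S I)$ is naturally $(I/I^2)\otimes_R \overline R$, and tensoring once more over $\overline R$ with $k(\q)$ gives the chain of canonical isomorphisms
\[
\tfrac{I}{I^2+\ma_S I}\otimes_{\overline R} k(\q) \;\cong\; I\otimes_R k(\q) \;\cong\; IS_{\q}/\q IS_{\q}.
\]
Under this identification, $\J\otimes_{\overline R} k(\q)$ is precisely the map $[f]\mapsto 1\otimes \overline{d}_{S/A}(f)$, so its rank equals the dimension of the image of $IS_{\q}$ in $k(\q)\otimes_{\overline S}\overline{\Omega}_{S/A}$. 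Chaining this with the previous paragraph, $\rank_{k(\q)}(\J \otimes_{\overline R} k(\q))$ equals the dimension of the image of $IS_{\q}$ in $\q S_{\q}/\q^2 S_{\q}$, and the theorem follows from the reformulation in the first paragraph.

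The only delicate point is bookkeeping the identifications in the last paragraph so that $\J\otimes_{\overline R} k(\q)$ really is identified with the map coming from $\overline{d}_{S/A}$ localized at $\q$; this requires noting that $k(\q)\otimes_{\overline S}\overline{\Omega}_{S/A} \cong k(\q)\otimes_{\overline S_{\q}}\overline{\Omega}_{S_{\q}/A}$, exactly as in the proof of \cref{cor1}, so that the split injectivity of \cref{cor1} can be combined cleanly with the presentation of $\overline{\Omega}_{R/A}$ from \cref{2ndseq}. Everything else is a direct assembly of the facts already established in this section.
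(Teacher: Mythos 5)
Your proposal is correct and follows essentially the same route as the paper: reduce regularity of $R_{\q}$ to the numerical condition $\dim_{k(\q)}\mathrm{image}(IS_{\q}\to \q S_{\q}/\q^2 S_{\q}) = \height IS_{\q}$ (the paper isolates this as a separate lemma), then use the split injection $\q S_{\q}/\q^2 S_{\q}\hookrightarrow k(\q)\otimes_{\overline S}\overline\Omega_{S/A}$ from \cref{cor1} to transfer that rank to $\rank(\J\otimes_{\overline R} k(\q))$. The paper packages the final identification as a commutative triangle through $\q S_{\q}/\q^2 S_{\q}$, while you chain the isomorphisms more explicitly, but the substance is identical.
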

\begin{proof}
    Note as $\q \supseteq I + \ma_S$, $k(\q)$ is indeed an $\overline R$ module. Also note that \[ \frac{I}{I^2+I\ma} \otimes_{\overline R} k(\q) \cong \frac{I}{I^2+I\ma} \otimes_{\overline S} k(\q) \cong IS_{\q}/I\q S_{\q} \] Hence ${\J \otimes_{\overline R} k(\q) :  IS_{\q}/I\q S_{\q} \to k(\q) \otimes_{\overline S} \overline \Omega_{S/A}}$. We then have a commutative triangle \[\begin{tikzcd}
	{IS_{\mathfrak q}/I{\mathfrak q}S_{{\mathfrak q}}} && {k(\q) \otimes_{\overline S} \overline \Omega_{S/A}} \\
	& {\q S_{\q}/\q^2S_{\q}}
	\arrow["{\J \otimes_{\overline R} k(\q)}", from=1-1, to=1-3]
	\arrow["{\eta_{\q}}"', from=1-1, to=2-2]
	\arrow["{1 \otimes \overline d_{S/A}}"', hook, from=2-2, to=1-3]
\end{tikzcd}\]where $\eta_{\q}$ is the natural map induced by the inclusion $I \subseteq \q$. That this triangle commutes and that $1 \otimes \overline d_{S/A}$ is an injection follows from Corollary \ref{cor1} since  $\frac{S_{\q}}{{\q}^2S_{\q}} \onto \frac{S_{\q}}{{\q}S_{\q}}$ splits as a map of $A$-algebras. So $\rank_{k(\q)}(\J \otimes_{\overline R} k(\q)) = \rank \eta_{\q}$. Since we also assume that $S_{\q}$ is a regular local ring, the proof is complete by the following well known lemma.  
\end{proof}
\begin{lemma}
    Suppose $(A, \m, k)$ is a regular local ring and $J \subseteq A$ is an ideal of height $h$. We have $A/J$ is regular if, and only if, $\rank(J/\m J \to \m/\m^2) = h$.  
\end{lemma}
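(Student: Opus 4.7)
The plan is to reduce regularity of $A/J$ to a direct comparison of its Krull dimension and its embedding dimension, and then compute each quantity in terms of the data of $A$ and $J$. Let $d = \dim A$ and write $\overline{\m} = \m/J$ for the maximal ideal of $A/J$.

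First I would compute $\dim A/J$. Because $A$ is regular it is Cohen--Macaulay, and the standard dimension formula for Cohen--Macaulay local rings yields $\height J + \dim A/J = \dim A$, hence $\dim A/J = d - h$.

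Next I would compute the embedding dimension $\dim_k \overline{\m}/\overline{\m}^2$ of $A/J$. Using $\overline{\m}^2 = (\m^2 + J)/J$, the third isomorphism theorem gives $\overline{\m}/\overline{\m}^2 \cong \m/(\m^2 + J)$, and the short exact sequence
\[ 0 \to (J + \m^2)/\m^2 \to \m/\m^2 \to \m/(\m^2+J) \to 0 \]
of $k$-vector spaces gives $\dim_k \overline{\m}/\overline{\m}^2 = d - \dim_k (J+\m^2)/\m^2$. The image of the natural map $J/\m J \to \m/\m^2$ is exactly $(J+\m^2)/\m^2$ (it is generated by the images of any set of generators of $J$), so the rank of that map equals $\dim_k (J+\m^2)/\m^2$. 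Combining these, the embedding dimension of $A/J$ is $d - \rank(J/\m J \to \m/\m^2)$.

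Finally, by the standard characterization, $A/J$ is regular if and only if its Krull dimension equals its embedding dimension. Substituting the two formulas, this becomes $d - h = d - \rank(J/\m J \to \m/\m^2)$, equivalently $\rank(J/\m J \to \m/\m^2) = h$. There is no real obstacle here; the only non-formal input is the Cohen--Macaulayness of $A$, which is used to obtain the clean dimension formula $\dim A/J = d - h$, while everything else is linear algebra and the definition of a regular local ring.
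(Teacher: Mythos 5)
Your proof is correct, and it takes a genuinely different route from the paper's. The paper argues via minimal generating sets: it asserts that $\rank(J/\m J \to \m/\m^2)=h$ is equivalent to $J$ being minimally generated by a regular $A$-sequence of length $h$ consisting of elements that are part of a regular system of parameters, and that this in turn is equivalent to $A/J$ being regular; the converse direction rests on showing $\dim_k(J/\m J)=h$ when $A/J$ is regular. You instead compute the two invariants that define regularity directly: $\dim A/J = \dim A - h$ from Cohen--Macaulayness of the regular local ring $A$, and $\operatorname{embdim}(A/J) = \dim A - \rank(J/\m J\to\m/\m^2)$ from the short exact sequence of $k$-vector spaces, then invoke the definition of regularity as ``Krull dimension equals embedding dimension.'' Your approach is tidier and buys you a proof that avoids any discussion of whether $J$ is generated by a regular sequence or of whether the map $J/\m J\to\m/\m^2$ is injective; it only needs the natural inequality $\operatorname{embdim}\geq\dim$ (built into the definition) and the dimension formula for CM local rings. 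The paper's approach, while slightly less direct, extracts the additional structural information that when $A/J$ is regular the ideal $J$ is minimally generated by part of a regular system of parameters, which is a useful fact in its own right though not logically necessary for the stated equivalence.
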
 
\begin{proof}
    Suppose $J$ is minimally generated by $f_1, \ldots, f_t$ where $h \leq t$. Observe that $\rank(J/\m J \to \m/\m^2) = h$ if, and only if, $J$ is minimally generated by ${\f = f_1, \ldots, f_h}$, where $\f$ is a regular $A$-sequence with $f_i \in \m \setminus \m^2$ for all $i$. The latter holds if, and only if, $A/J$ is regular. The only if part is clear. Conversely, if $A/J$ is regular, $\dim_k(J/\m J) = \dim_k(\m/\m^2) - \dim_k(\m/(J+\m^2)) = \height J$, whence $t = h$ and $\f$ is as claimed.

\end{proof}
\end{chunk}

\section{Derivations over a ramified discrete valuation ring} \label{ramSec}

\noindent In this section, we fix $(V, \pi)$ to be a discrete valuation ring of mixed characteristic $(0, p)$ with a uniformizer $\pi$ and $S = V[x_1, \ldots, x_n]$. Let $\Lambda = \{ \lambda : \lambda \in V \}$ be a subset of $V$ that maps bijectively to a $p$-base of $k : = V/(\pi)$. We have that $\Omega_{k/k^p}$ is a free $k$-module with basis $\{ d \lambda : \lambda \text{ $p$-basis element of $k$ over $k^p$}\}$ \cite[Theorem 16.14]{Eisenbud:1995}. We denote by $\frac{\partial}{\partial \lambda}$ the corresponding $k^p$-linear derivation that maps $d\lambda \mapsto 1$, and by abuse of notation, its extension to $\overline S = S/\pi S = k[x_1, \ldots, x_n]$; see \cite[2.2]{Hochster/Jeffries:2021} for more information on $p$-bases and derivations of inseparability.  By directly checking the universal property, we may verify that the derivation ${d_{\overline S/k^{p}}: \overline S \to \Omega_{\overline S/k^p}}$ where ${d_{\overline S/k^{p}} =  \sum_{\lambda \in \Lambda} \frac{\partial}{\partial \lambda} + \sum_{i=1}^n \frac{\partial}{\partial x_i}}$ represents the functor ${\Der_{k^p}(\overline S, -): \overline S\text{-Mod} \to \overline S\text{-Mod}}$. In fact, it represents the functor ${\Der_{\Z}(\overline S, -): \overline S\text{-Mod} \to \overline S\text{-Mod}}$ due to the following proposition. 

\begin{proposition} \label{Z=k^p}
    Suppose $k$ is a field of characteristic $p > 0$. Then for any $k[x_1, \ldots, x_n]$-module, $\Der_{\Z}(k[x_1, \ldots, x_n], M) = \Der_{k^p}(k[x_1, \ldots, x_n], M)$. 
\end{proposition}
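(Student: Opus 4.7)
The plan is to prove a single containment, namely $\Der_{\Z}(k[x_1,\ldots,x_n], M) \subseteq \Der_{k^p}(k[x_1,\ldots,x_n], M)$, since the reverse inclusion is immediate from the fact that $\Z \subseteq k^p$ inside $R := k[x_1,\ldots,x_n]$.

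First I would unwind what it means for a $\Z$-derivation $d: R \to M$ to actually be $k^p$-linear. By convention an $A$-derivation from $R$ to $M$ is an additive map that satisfies the Leibniz rule and vanishes on the image of $A$; equivalently, $d(a r) = a\, d(r)$ for $a \in A$ and $r \in R$. Given the Leibniz rule and additivity, showing that $d$ vanishes on $k^p$ automatically upgrades $k^p$-additivity to $k^p$-linearity because for $\alpha \in k^p$ and $r \in R$ we get $d(\alpha r) = \alpha d(r) + r d(\alpha) = \alpha d(r)$. So the whole content of the proposition is the vanishing of $d$ on $k^p$.

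Next I would exploit the fact that $M$ is a $k[x_1,\ldots,x_n]$-module, hence a $k$-module, and in particular an $\F_p$-vector space, so $p \cdot M = 0$. Take any $\alpha \in k^p$ and write $\alpha = \beta^p$ for some $\beta \in k \subseteq R$. Then the Leibniz rule applied iteratively (or directly as the power rule, valid for any derivation on a commutative ring) gives
\[ d(\alpha) = d(\beta^p) = p \beta^{p-1} d(\beta) = 0 \]
in $M$. Combined with the observation of the previous paragraph, this shows every $\Z$-derivation $d$ is $k^p$-linear, which is the desired containment.

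There is no real obstacle here; the only thing to be careful about is the distinction between $d$ vanishing on $k^p$ (an additivity condition on the source of $d$) and $d$ being $k^p$-linear as a map of $k^p$-modules (a scalar multiplication condition), and the argument above explains why these coincide in the presence of the Leibniz rule. The essential ingredient is that $M$ carries an $\F_p$-action inherited from being a $k[x_1,\ldots,x_n]$-module.
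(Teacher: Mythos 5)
Your proof is correct and follows essentially the same route as the paper: both hinge on the Leibniz/power rule giving $d(\beta^p)=p\beta^{p-1}d(\beta)$ and the observation that $p$ annihilates $M$. The paper simply folds the two steps (vanishing on $k^p$, then upgrading to $k^p$-linearity) into a single computation $d(c^pf)=c^pd(f)+fd(c^p)=c^pd(f)$.
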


\begin{proof}
    Indeed, as $\Z \to k^p$, any $k^p$-linear derivation is a $\Z$-linear derivation, so $\supseteq$ is clear. Conversely, if $d: k[x_1, \ldots, x_n] \to M$ is a $\Z$-linear derivation, for $c \in k$ and $f \in k[x_1, \ldots, x_n]$, we have $d(c^p f) = c^pd(f) + f p d(c^{p-1}) = c^p d(f)$ as $p$ kills $M$. So $d$ is a $k^p$-linear derivation.
\end{proof}

\noindent Assume further that $V$ is ramified, i.e. $p \in (\pi)^2$. As in Section \ref{sec1}, we will consider $\overline S = S/(\pi)S$ and the functor $\Der_{\Z}(S, -): \overline S\text{-Mod} \to \overline S\text{-Mod}$.

\begin{lemma}\label{lem}
    Let $S = V[x_1, \ldots, x_n]$. The natural map $S/\pi^2S \onto S/\pi S$ splits as map of $\Z$-algebras. 
\end{lemma}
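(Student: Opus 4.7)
The plan is to exploit the hypothesis that $V$ is \emph{ramified}, which by definition means $p \in \pi^2 V$. Passing to the quotient, this gives $p = 0$ in $V/\pi^2 V$, so $V/\pi^2 V$ is an $\F_p$-algebra. This is the key observation that makes the lemma tractable: the \emph{a priori} mixed-characteristic obstruction to the existence of a section has been removed, and the rest of the proof will take place entirely within equal characteristic $p$.

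Next I would observe that $V/\pi^2 V$ is a Noetherian local ring whose unique maximal ideal $\pi V/\pi^2 V$ squares to zero; hence it is Artinian and, in particular, complete. Since it contains $\F_p$, Cohen's structure theorem for equal-characteristic complete Noetherian local rings supplies a coefficient field: a subring $k_0 \subseteq V/\pi^2 V$ which is a field and maps isomorphically onto the residue field $V/\pi V = k$. Let $\sigma_0 : k \to V/\pi^2 V$ denote the inverse isomorphism; it is automatically a $\Z$-algebra homomorphism splitting $V/\pi^2 V \onto k$.

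Finally, I would pass to the polynomial ring. After identifying $S/\pi^2 S = (V/\pi^2 V)[x_1, \ldots, x_n]$ and $S/\pi S = k[x_1, \ldots, x_n]$, I would extend $\sigma_0$ to a $\Z$-algebra map $\sigma : S/\pi S \to S/\pi^2 S$ by sending each $x_i$ to $x_i$. Composing $\sigma$ with the canonical surjection $S/\pi^2 S \onto S/\pi S$ yields a $\Z$-algebra endomorphism of $S/\pi S$ which is the identity on $\sigma_0(k)$ and on each $x_i$, hence the identity on all of $S/\pi S$; thus $\sigma$ is the desired splitting.

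The main obstacle, such as it is, lies only in identifying the correct framework: once ramification has been used to place us in characteristic $p$, Cohen's theorem handles the nontrivial construction of the coefficient field as a black box. A more hands-on alternative that avoids invoking Cohen would exploit the fixed set $\Lambda \subseteq V$ lifting a $p$-base of $k$: the $p$-th power map $a \mapsto a^p$ on $V/\pi^2 V$ is a ring endomorphism (additivity uses $p = 0$) which kills the square-zero ideal $\pi V/\pi^2 V$ (since $\pi^p \in \pi^2 V$ for $p \geq 2$), hence factors through $k$; one can then assemble $\sigma_0$ from this factored Frobenius together with the lifts in $\Lambda$, but this essentially recapitulates the standard proof of Cohen's theorem in positive characteristic.
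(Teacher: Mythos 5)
Your proof is correct and follows essentially the same route as the paper: use ramification to put $V/\pi^2 V$ in characteristic $p$, note it is Artinian (hence complete) local, invoke Cohen's structure theorem to obtain a coefficient field splitting $V/\pi^2 V \onto k$, and extend to the polynomial ring by sending each $x_i$ to $x_i$. The concluding sketch of a hands-on alternative via $p$-th powers and a lifted $p$-base is a nice addendum but, as you observe, just replays the standard proof of Cohen's theorem in equal characteristic $p$.
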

\begin{proof}
    As $p \in (\pi)^2$, $V/\pi^2$ is an artinian, and therefore complete, ring of characteristic $p > 0$. By Cohen's structure theorem, it will contain a copy of its residue field $V/\pi$. In other words, the map $V/\pi^2 \onto V/\pi$ splits as a map of $\Z$-algebras. Tensoring with $\Z[x_1, \ldots, x_n]$, we see that $S/\pi^2S \onto S/\pi S$ splits as a map of $\Z$-algebras. 
\end{proof}

\begin{proposition} \label{directSumProp}
    Suppose $(V, \pi)$ is a ramified discrete valuation ring of mixed characteristic $(0, p)$ with uniformizer $\pi$. Set $S = V[x_1, \ldots, x_n]$ and $\overline S = S/(\pi)S$. Given any $\overline S$-module $M$, we have an isomorphism of $\overline S$-modules \[ \Der_{\Z}(S, M) \cong \Der_{\Z}(\overline S, M) \oplus \Hom_{\overline S}\left( \frac{\pi S}{\pi^2 S}, M\right) \]
\end{proposition}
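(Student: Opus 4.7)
The plan is a direct invocation of Proposition \ref{thm1}. I would apply that proposition to the tower of ring maps $\Z \to S \twoheadrightarrow S/(\pi)S = \overline S$, taking the ideal there to be $I = \pi S$ and the base ring to be $A = \Z$. For any $\overline S$-module $M$, this produces a left exact sequence
$$0 \to \Der_{\Z}(\overline S, M) \to \Der_{\Z}(S, M) \to \Hom_{\overline S}\!\left(\frac{\pi S}{\pi^2 S},\, M\right),$$
and Proposition \ref{thm1} further tells us that this sequence is split exact provided that the surjection $S/\pi^2 S \twoheadrightarrow S/\pi S$ admits a section as a map of $\Z$-algebras.

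The second input is then exactly Lemma \ref{lem}, which supplies this splitting using Cohen's structure theorem together with the ramification hypothesis $p \in (\pi)^2$ (so that $V/\pi^2$ is Artinian of characteristic $p$ and hence contains a copy of its residue field). Combining the two inputs, the left exact sequence above becomes split short exact, which gives the desired direct sum decomposition as $\overline S$-modules.

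I do not expect any real obstacle here: the statement is essentially a corollary of Proposition \ref{thm1} and Lemma \ref{lem}, both of which are already in hand. The only thing worth verifying along the way is that the splitting $\Der_{\Z}(S, M) \to \Der_{\Z}(\overline S, M)$ agrees with precomposition along $S \twoheadrightarrow \overline S$, which is immediate from the explicit description given in the proof of Proposition \ref{thm1} (via $f \mapsto f \circ (-\alpha)$ for $\alpha = \iota \circ \sigma - \id$ coming from the chosen splitting $\iota$ of $\sigma : S/\pi^2 S \twoheadrightarrow S/\pi S$), so the decomposition is natural in $M$ as required.
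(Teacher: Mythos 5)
Your proposal is correct and matches the paper's proof exactly: both apply Proposition \ref{thm1} to the tower $\Z \to S \twoheadrightarrow \overline S$ with $I = \pi S$ and then invoke Lemma \ref{lem} to supply the required $\Z$-algebra splitting of $S/\pi^2 S \twoheadrightarrow S/\pi S$, yielding the split short exact sequence and hence the direct sum decomposition.
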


\begin{proof}
    Given Lemma \ref{lem}, this follows by applying Proposition \ref{thm1} to maps ${\Z \to S \to \overline S}$. 
\end{proof}

\begin{corollary}\label{d/dpi}
    Suppose $(V, \pi)$ is a ramified discrete valuation ring of mixed characteristic $(0, p)$ with uniformizer $\pi$. Set $S = V[x_1, \ldots, x_n]$ and $\overline S = S/(\pi)S$. There exists a $\Z$-derivation $\frac{\partial}{\partial \pi}: S \to \overline S$ such that $\frac{\partial}{\partial \pi} (\pi) = 1$. 
\end{corollary}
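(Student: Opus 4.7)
The plan is to extract the derivation directly from the direct sum decomposition of Proposition \ref{directSumProp}. Taking $M = \overline{S}$, we get
\[ \Der_{\Z}(S, \overline S) \;\cong\; \Der_{\Z}(\overline S, \overline S) \,\oplus\, \Hom_{\overline S}\!\left( \tfrac{\pi S}{\pi^2 S}, \overline S\right). \]
The second summand is the relevant one: since $\pi S/\pi^2 S$ is a cyclic $\overline{S}$-module generated by the class of $\pi$, and the annihilator is $\pi \overline{S} = 0$, it is free of rank $1$ on the class of $\pi$. So there is a unique $\overline{S}$-linear map $g : \pi S/\pi^2 S \to \overline{S}$ sending $[\pi] \mapsto 1$. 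I would then define $\partial/\partial \pi$ to be the $\Z$-derivation in $\Der_{\Z}(S, \overline{S})$ corresponding to $(0, g)$ under the isomorphism above.

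It remains to check $\tfrac{\partial}{\partial \pi}(\pi) = 1$. This is where one needs to be careful and is really the only nontrivial step: unwinding the construction in Proposition \ref{thm1}, the map $\Hom_{\overline S}(\pi S/\pi^2 S, -) \to \Der_{\Z}(S, -)$ sends $g$ to $g \circ (-\alpha)$, where $\alpha = \iota \circ \sigma - \id_{S/\pi^2 S}$ with $\sigma : S/\pi^2 S \to \overline{S}$ the projection and $\iota : \overline{S} \to S/\pi^2 S$ the $\Z$-algebra splitting guaranteed by Lemma \ref{lem}. Because $\sigma(\pi) = 0$, we get $\iota\sigma(\pi) = 0$ and hence $\alpha(\pi) = -\pi$, so
\[ \tfrac{\partial}{\partial \pi}(\pi) \;=\; g(-\alpha(\pi)) \;=\; g([\pi]) \;=\; 1, \]
as desired.

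The main (and essentially the only) obstacle is verifying that the derivation produced by the abstract splitting genuinely evaluates to $1$ on $\pi$; everything else is formal bookkeeping from Proposition \ref{directSumProp}. Once one notes that the splitting used in the proof of Proposition \ref{thm1} is constructed from $\alpha = \iota\sigma - \id$ and that $\iota\sigma$ kills $\pi$, the computation falls out immediately.
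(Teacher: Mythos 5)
Your argument is exactly the paper's: apply Proposition \ref{directSumProp} with $M = \overline S$, note $\pi S/\pi^2 S$ is free of rank one on $[\pi]$, and take the derivation corresponding to the $\overline S$-linear map $[\pi]\mapsto 1$. The paper states the conclusion without unwinding the splitting $\phi : g\mapsto g\circ(-\alpha)$, while you carry out that check explicitly (and correctly, since $\iota\sigma(\pi)=0$ gives $-\alpha(\pi)=[\pi]$); this is the content the paper defers to Remark \ref{moduleDeri}.
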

\begin{proof} 
    We apply Proposition \ref{directSumProp} to $M = \overline S$. Then the $\overline S$-linear map $\frac{\pi S}{\pi^2 S} \to \overline S$ where $\bar \pi \mapsto 1$ lifts to a $\Z$-derivation $S \to \overline S$ with the desired property.  
\end{proof}

\begin{remark}\label{moduleDeri}
    We now explain the derivation $\frac{\partial}{\partial \pi}$ that is a lift of the map $\bar \pi \mapsto 1$ in \cref{d/dpi} in more detail. Let $\overline{S} = S/\pi S$. Given \cref{lem}, we have $ \sigma: S/\pi^2 S \twoheadrightarrow \overline{S}$ and its splitting $\iota: \overline{S} \xhookrightarrow{} S/\pi^2S$. Following the proof of \cref{thm1}, the map $D = \iota \circ \sigma - \id_{S/\pi^2 S} : S/\pi^2 S \to \pi S/\pi^2 S $ is an additive derivation. Note that $S/\pi^2 S = (S/\pi^2 S)[x_1, \ldots, x_n]$, so an arbitrary $f \in S/\pi^2 S$ can be represented as a finite sum of monomials of the type $u\pi^k x_1^{i_1}\ldots x_n^{i_n}$ where $u \in V^{\times}$ and $k \in \{0, 1\}$. Now one can observe that $D(ux_1^{i_1}\ldots x_n^{i_n}) = 0$ and $D(u\pi x_1^{i_1}\ldots x_n^{i_n}) = -u\pi x_1^{i_1}\ldots x_n^{i_n}$. 
    
    The derivation $D$ induces a split injection of $\overline{S}$-modules $\Hom_{\overline S}\left(\frac{\pi S}{\pi^2 S}, \overline S \right) \to \Der_{\Z}(S, \overline{S})$ where $s \mapsto -s \circ D $. As an $\overline{S}$-module, $\pi S/\pi^2 S \cong S/\pi S$ where $\pi \mapsto 1$. Therefore, \[ \Hom_{\overline S}\left(\frac{\pi S}{\pi^2 S}, \overline S \right) \cong \Hom_{\overline S}(\overline S, \overline S)\] where $\cdot s: \overline S \to \overline S$ for $s \in \overline S$ corresponds to a map $\pi S/\pi^2 S \to \overline S$ where ${\pi \mapsto s}$. The derivation $d: S \to \overline S$ corresponding to this map is such that $d(ux_1^{i_1}\ldots x_n^{i_n}) = 0$ and $d(u\pi x_1^{i_1}\ldots x_n^{i_n}) = s \cdot u x_1^{i_1}\ldots x_n^{i_n}$. The derivation of interest is the derivation $d = \frac{\partial}{\partial \pi}$ corresponding to the linear map where $s = 1$. 
    
    We also note that given any $\overline S$-module $M$ and any $m \in M$,  we have a derivation $m \frac{\partial}{\partial \pi}$ defined as $s \mapsto \frac{\partial (s)}{\partial \pi} m$. Indeed, this is well defined as $M$ is a $\overline S$-module and since $\frac{\partial}{\partial \pi}$ is a derivation, it is easy to check that $m\frac{\partial}{\partial \pi}$ is a derivation too and we have $m \frac{\partial}{\partial \pi}(\pi) = m$. 

\end{remark}

\begin{theorem} \label{universalDer}
     Suppose $(V, \pi)$ is a ramified discrete valuation ring of mixed characteristic $(0, p)$ with uniformizer $\pi$. Set $S = V[x_1, \ldots, x_n]$ and $\overline S = S/\pi S$. Let $\Omega^{\pi}_{S/\Z} = \Omega_{\overline S/k^p} \oplus \overline S d\pi$ and $d^{\pi}_{S/\Z}: S \to \Omega^{\pi}_{S/\Z}$ to be the map $s \mapsto d_{\overline S/k^p} (s) + \frac{\partial(s)}{\partial \pi}d\pi$. Then $d^{\pi}$ represents the functor ${\Der_{\Z}(S, -): \overline S\text{-Mod} \to \overline S\text{-Mod}}$. 
\end{theorem}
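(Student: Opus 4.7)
The plan is to show directly that for every $\overline S$-module $M$, the natural $\overline S$-linear map
\[ \Phi_M: \Hom_{\overline S}(\Omega^{\pi}_{S/\Z}, M) \to \Der_{\Z}(S, M),\quad \phi \mapsto \phi \circ d^{\pi}_{S/\Z} \]
is a bijection, and that it is natural in $M$. The key is that both sides split naturally as a direct sum of two pieces, one corresponding to the $\Omega_{\overline S / k^p}$ summand of $\Omega^{\pi}_{S/\Z}$ and one corresponding to the $\overline S d\pi$ summand. Matching these piece by piece is what I would need to do.

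First I would check that $d^{\pi}_{S/\Z}$ is actually a $\Z$-derivation: it is the sum of $d_{\overline S/k^p}$ precomposed with $S \onto \overline S$ (landing in $\Omega_{\overline S/k^p}$) and the map $s \mapsto \frac{\partial(s)}{\partial \pi}d\pi$ (landing in $\overline S d\pi$), each of which is a $\Z$-derivation by construction and by \cref{d/dpi}. Then, using $\Omega^{\pi}_{S/\Z} = \Omega_{\overline S/k^p}\oplus \overline S d\pi$, write any $\phi$ as $\phi = (\phi_1,\phi_2)$, so that
\[ \Phi_M(\phi)(s) = \phi_1(d_{\overline S/k^p}(\overline s)) + \tfrac{\partial s}{\partial \pi}\,\phi_2(d\pi). \]

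On the target side, \cref{directSumProp} together with \cref{Z=k^p} and the identification $\pi S/\pi^2 S \cong \overline S$ (sending $\pi \mapsto 1$) from \cref{moduleDeri} yields the chain of isomorphisms
\[ \Der_{\Z}(S,M) \cong \Der_{\Z}(\overline S,M) \oplus \Hom_{\overline S}(\pi S/\pi^2 S,M) \cong \Der_{k^p}(\overline S,M) \oplus M \cong \Hom_{\overline S}(\Omega_{\overline S/k^p},M) \oplus M. \]
I would then verify that $\Phi_M$ is the direct sum of two identifications. The first component is automatic: $\phi_1 \mapsto \phi_1 \circ d_{\overline S/k^p}$ is literally the universal property of the K\"ahler differentials of $\overline S$ over $k^p$. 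For the second component, setting $m := \phi_2(d\pi) \in M$, the corresponding derivation $s \mapsto \tfrac{\partial s}{\partial \pi} m = (m\tfrac{\partial}{\partial \pi})(s)$ is the derivation $m\tfrac{\partial}{\partial \pi}$ described in \cref{moduleDeri}, which by construction matches, under the splitting of \cref{thm1}, the $\overline S$-linear map $\pi S/\pi^2 S \to M$ sending $\pi \mapsto m$.

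The main bookkeeping obstacle is to align the two decompositions on the nose: the splitting of $\Der_{\Z}(S,M)$ from \cref{thm1} is given via the map $\alpha = \iota\circ\sigma - \id$ on $S/\pi^2 S$, and the derivation $\tfrac{\partial}{\partial \pi}$ was defined in \cref{d/dpi} precisely to match the $\overline S$-linear map $\pi \mapsto 1$ under this splitting (cf. the explicit formulas in \cref{moduleDeri}). Once that compatibility is noted, the direct-sum description of $\Phi_M$ immediately yields that it is an isomorphism, and naturality in $M$ is automatic since every ingredient (the Hom groups, the decompositions, the universal derivations) is natural.
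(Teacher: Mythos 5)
Your proposal is correct and is essentially the same proof as the paper's, built from the identical ingredients: the direct-sum decomposition of $\Der_{\Z}(S,M)$ from \cref{directSumProp}, the identification $\Der_{\Z}(\overline S,M)=\Der_{k^p}(\overline S,M)$ from \cref{Z=k^p}, and the construction of $\frac{\partial}{\partial \pi}$ as the lift of $\pi\mapsto 1$ from \cref{moduleDeri}. The only difference is presentational: you package the argument as matching the two direct-sum decompositions of $\Hom_{\overline S}(\Omega^{\pi}_{S/\Z},-)$ and $\Der_{\Z}(S,-)$ summand by summand, whereas the paper starts from an arbitrary derivation $d$, peels off $d(\pi)\,\tfrac{\partial}{\partial\pi}$, applies the universal property of $\Omega_{\overline S/k^p}$ to what remains, and then checks uniqueness on basis elements.
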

\begin{proof}
     Consider some $\overline S$-module $M$ and some arbitrary $\Z$-derivation $d: S \to M$. Suppose $d(\pi) = m$ for some $m \in M$. Take the derivation $m \frac{\partial}{\partial \pi}$ as in Remark \ref{moduleDeri} and so $d - m \frac{\partial}{\partial \pi} : S \to M$ is such that $(d - m \frac{\partial}{\partial \pi})(\pi) = 0$. Therefore, given the direct sum decomposition in Proposition \ref{directSumProp}, $d - m \frac{\partial}{\partial \pi}$ corresponds uniquely to a derivation from $\overline S \to M$. Hence there exists a unique $\overline S$-linear map $\phi_1 : \Omega_{\overline S/k^p} \to M$ such that $\phi_1 \circ d_{\overline S/k^p} = d - m \frac{\partial}{\partial \pi}$ by the universality of $d_{\overline S/k^p}$ and noting Proposition \ref{Z=k^p}. We extend $\phi_1$ to $\overline S$-linear map $\Omega^{\pi}_{S/\Z} \to M$ by setting $\phi_1(d\pi) = 0$. On the other hand let $\phi_2: \Omega^{\pi}_{S/\Z} \to M$ be the $\overline S$-linear map where $\phi_2(d\pi) = m$ and $\phi_2(dy) = 0$ for all basis elements $dy$ of $\Omega_{\overline S/k^p}$. Set $\phi = \phi_1 + \phi_2 : \Omega^{\pi}_{S/\Z} \to M$. Then $\phi$ is a $\overline S$-linear map such that $\phi \circ d^{\pi}_{S/\Z} = d$. If $\phi'$ is any other $\overline S$-linear map such that $\phi' \circ d^{\pi}_{S/\Z} = d$, then $(\phi - \phi')\circ d^{\pi}_{S/\Z} = 0$. Since there is a $b \in S$ such that $d^{\pi}_{S/\Z}(b) = db$ for every basis element $db$ of $\Omega^{\pi}_{S/\Z}$, we have $\phi(db) = \phi'(db)$ for all basis elements, whence $\phi = \phi'$. This completes our proof. 
\end{proof}

\begin{corollary} \label{equivUniversalMaps}
    Suppose $(V, \pi)$ is a ramified discrete valuation ring of mixed characteristic $(0, p)$ with uniformizer $\pi$. Set $S = V[x_1, \ldots, x_n]$ and $\overline S = S/(\pi)S$. The following two derivations are equivalent: 
    \[ (\overline d_{S/\Z}: S \to \overline \Omega_{S/\Z}) \equiv (d^{\pi}_{S/\Z}: S \to \Omega^{\pi}_{S/\Z})\]In particular, $\overline \Omega_{S/\Z} \cong \Omega_{\overline S/k^p} \oplus \overline S d\pi$. 
\end{corollary}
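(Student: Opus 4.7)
The plan is to invoke the uniqueness of representing objects. Both derivations in question represent the same functor $\Der_{\Z}(S, -)\colon \overline S\text{-Mod} \to \overline S\text{-Mod}$: the derivation $\overline d_{S/\Z}$ represents it by the general discussion of derivations into modules killed by an ideal (applied with $\mathfrak{a} = \pi S$, so that $\Der_{\Z}(S, M) \cong \Hom_{\overline S}(\overline \Omega_{S/\Z}, M)$ for every $\overline S$-module $M$), while $d^{\pi}_{S/\Z}$ represents it by \cref{universalDer}.

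From this, the Yoneda lemma delivers a unique $\overline S$-linear isomorphism $\Phi\colon \overline \Omega_{S/\Z} \xrightarrow{\cong} \Omega^{\pi}_{S/\Z}$ such that $\Phi \circ \overline d_{S/\Z} = d^{\pi}_{S/\Z}$, and this is precisely what it means for the two derivations to be equivalent. The particular form of $\Omega^{\pi}_{S/\Z}$ given in \cref{universalDer} then yields the direct sum decomposition $\overline \Omega_{S/\Z} \cong \Omega_{\overline S/k^p} \oplus \overline S d\pi$.

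There is essentially no obstacle — the content has been discharged in \cref{universalDer} and in the earlier remarks identifying $\overline d_{S/\Z}$ as the representing map for the same functor — so the corollary is a formal consequence and the proof should fit in a couple of lines.
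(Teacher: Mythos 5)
Your proposal is correct and follows exactly the paper's argument: both $\overline d_{S/\Z}$ (by the discussion in Section~\ref{sec1} of derivations into modules killed by an ideal) and $d^{\pi}_{S/\Z}$ (by Theorem~\ref{universalDer}) represent the functor $\Der_{\Z}(S,-)$ on $\overline S$-modules, so the uniqueness of representing objects gives the equivalence and hence the direct sum decomposition. Nothing is missing.
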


\begin{proof}
Since $\overline d_{S/\Z}$ also represents the functor $\Der_{\Z}(S, -): \overline S\text{-Mod} \to \overline S\text{-Mod}$ (see Section \ref{sec1}), the assertion follows by Theorem \ref{universalDer} and the uniqueness of representing a functor. 
\end{proof}

\begin{proposition} \label{mainEx}
    Suppose $A = \Z $ and $(V, \pi)$ is a ramified discrete valuation ring of mixed characteristic $(0, p)$ with a uniformizer $\pi$. Set $S = V[x_1, \ldots, x_n]$. For $\ma_S = (\pi)S$, $S$ satisfies the conditions of Setup \ref{setup}, i.e. for all $\q \in \Spec S$ such that $\pi \in \q$, we have $S_{\q}$ is a regular local ring such that $S_{\q}/\q^2 S_{\q} \onto S_{\q}/\q S_{\q}$ splits as a map of $\Z$-algebras. 
\end{proposition}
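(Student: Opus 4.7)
The plan is to verify the two conditions of \cref{setup} in turn. Condition (1) will be essentially automatic: since $V$ is a DVR, $V$ is regular, hence $S = V[x_1,\ldots,x_n]$ is regular as a polynomial extension of a regular Noetherian ring, and therefore any localization $S_\q$ is a regular local ring.

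The substance of the proposition is condition (2), and the decisive input is the ramification hypothesis. Since $V$ is ramified one has $p \in (\pi)^2$, and since $\pi \in \q$ this forces $p \in \q^2$. Consequently $p$ vanishes in the ring $R := S_\q/\q^2 S_\q$, so $R$ is an $\F_p$-algebra, i.e.\ of equal characteristic. This is exactly the step that fails in the unramified setting and explains why one must instead appeal to $p$-derivations there; here, ramification hands us an equal-characteristic quotient for free.

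Once $R$ is placed in equal characteristic $p$, the rest should fall into line via Cohen's structure theorem. The maximal ideal $\m := \q S_\q/\q^2 S_\q$ of $R$ satisfies $\m^2 = 0$, so $R$ is Artinian and hence complete in its $\m$-adic topology. Cohen's structure theorem for complete equal-characteristic local rings then produces a coefficient field, i.e.\ a ring section $\iota : K \to R$ of the surjection $R \onto K$, where $K = S_\q/\q S_\q$. Since any ring homomorphism is automatically a $\Z$-algebra homomorphism, $\iota$ is the $\Z$-algebra splitting required by condition (2). I do not anticipate a genuine obstacle here; the one step that deserves to be flagged rather than glossed is the implication $p \in (\pi)^2 \Rightarrow p \in \q^2$, which depends solely on ramification of $V$ and not on any property of the particular prime $\q$ containing $\pi$.
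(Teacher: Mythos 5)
Your proof is correct and follows the same strategy as the paper's: regularity of $S_\q$ is immediate since $S$ is a polynomial ring over a regular ring, and ramification forces $S_\q/\q^2 S_\q$ into equal characteristic $p$, whereupon Cohen's structure theorem supplies a coefficient field and hence the required $\Z$-algebra splitting. Your direct derivation of $p \in \q^2 S_\q$ from $p \in (\pi)^2 V$ together with $\pi \in \q$ is in fact cleaner than the paper's own treatment of this step, which reaches the same conclusion by a small argument by contradiction using the (hypothetical) regularity of $S_\q/(p)S_\q$.
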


\begin{proof}
    As $S$ is regular, $S_{\q}$ is also regular. Now take $\q \in \Spec S$ such that it contains $\pi$. Then $S_{\q}$ is a ramified local ring, i.e. $p \in \q^2S_{\q}$. Indeed, if we assume otherwise, $S_{\q}/(p)S_{\q}$ is a regular local ring. But since $(V, \pi)$ is a ramified discrete valuation ring, in this quotient, $\overline \pi \not= 0$ and $\overline \pi^n = 0$ for some $n > 1$. This is a contradiction. Therefore, $p \in \q^2S_{\q}$ which means $S_{\q}/\q^2S_{\q}$ is an artinian local ring of characteristic $p > 0$. Since artinian local rings are complete, by Cohen's structure theorem, this ring contains a copy of its residue field $S_{\q}/\q S_{\q}$. In other words, the following map \[ \frac{S_{\q}}{{\q}^2S_{\q}} \onto \frac{S_{\q}}{{\q}S_{\q}}\] splits as a map of $\Z$-algebras. 
\end{proof}

\noindent {\em Mixed Jacobian Matrix: } Suppose $(V, \pi)$ is a ramified discrete valuation ring of mixed characteristic $(0, p)$ and $S = V[x_1, \ldots, x_n]$. Due to Proposition \ref{mainEx}, we can apply Theorem \ref{detectingNonsingularity} to detect nonsingularity of a prime ideal of a finitely generated $V$-algebra that contains the element $\pi$ or equivalently the prime integer $p$. On the other hand, for prime ideals that do not contain $\pi$, the usual Jacobian criterion suffices. 

\begin{remark}
    Suppose $I = (f_1, \ldots, f_t) \subseteq S$ is an ideal and let $R = S/I$. Suppose $\q \in \Spec R \setminus V(\pi) = \Spec R \setminus V(p)$. Then $R_{\q} = V[\pi^{-1}][x_1, \ldots, x_n]_{\q}/I$. Note that $V[\pi^{-1}]$ is a characteristic $0$ field. So the regularity of $R_{\q}$ is detected by the usual Jacobian criterion. 
\end{remark}

\begin{definition} \label{mixedJacobianMatrix}
    Let $I = (f_1, \ldots, f_t) \subseteq S$ be an ideal and set $R = S/I$. We have the ideal $\ma_S = (\pi)S$ and its image $\ma = (\pi)R$ in $R$, and as usual we set $\overline S = S/\ma_S$ and $\overline R = R/\ma$. The ``mixed" Jacobian matrix, $\J^{\pi}_{R}$, is defined to be the matrix over $\overline R$ with rows: 
\begin{center}
    \begin{itemize}
    \item $[ \frac{\partial(f_1)}{\partial \pi} \, \cdots \, \frac{\partial(f_t)}{\partial \pi} ]$
    \item $[ \frac{\partial(f_1)}{\partial x_i} \, \cdots \, \frac{\partial(f_t)}{\partial x_i} ]$ for $i = 1, \ldots, n$. 
    \item $[ \frac{\partial(f_1)}{\partial \lambda}\, \cdots \, \frac{\partial(f_t)}{\partial \lambda} ]$ for $\lambda \in \Lambda$ for a $p$-base $\Lambda$ of $V/(\pi)$.  
\end{itemize}
\end{center}
\end{definition}

\begin{theorem} \label{JacobianSees}
    Suppose $R = S/I$ for some ideal $I \subseteq S$ and $\q \in \Spec R$ such that $\pi \in \q$. Let $h = \height IR_{\q}$ and let $\q$ also denote the image of $\q$ in $R/\pi R$. Then $R_{\q}$ is a regular local ring if, and only if, $\q$ does not contain all $h \times h$ minors of $\J^{\pi}_{R}$. 
\end{theorem}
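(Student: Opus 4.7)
The plan is to apply \cref{detectingNonsingularity} to $S = V[x_1, \ldots, x_n]$ with $A = \Z$ and $\ma_S = (\pi)S$, and then read off the resulting rank condition using the explicit model of $\overline\Omega_{S/\Z}$ provided by \cref{equivUniversalMaps}.

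First, \cref{mainEx} verifies that this setup meets the hypotheses of \cref{setup} for every prime $\q \in \Spec S$ with $\pi \in \q$. So \cref{detectingNonsingularity} gives that $R_\q$ is a regular local ring if and only if the $k(\q)$-rank of $\J \otimes_{\overline R} k(\q)$ equals $h = \height I R_\q$, where $\J : I/(I^2 + \pi I) \to \overline R \otimes_{\overline S} \overline \Omega_{S/\Z}$ is the $\overline R$-linear map from \cref{2ndseq} sending $[f] \mapsto 1 \otimes \overline d_{S/\Z}(f)$.

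Second, I would make $\J$ concrete. By \cref{equivUniversalMaps}, $\overline \Omega_{S/\Z} \cong \Omega^{\pi}_{S/\Z} = \Omega_{\overline S/k^p} \oplus \overline S\,d\pi$, with $\overline d_{S/\Z}$ corresponding to $d^{\pi}_{S/\Z}(s) = d_{\overline S/k^p}(s) + \frac{\partial s}{\partial \pi}\,d\pi$. From the preliminaries at the start of \cref{ramSec}, $\Omega_{\overline S/k^p}$ is a free $\overline S$-module with basis $\{dx_1, \ldots, dx_n\} \cup \{d\lambda : \lambda \in \Lambda\}$, so $\overline R \otimes_{\overline S} \overline \Omega_{S/\Z}$ is a free $\overline R$-module with basis $\{d\pi\} \cup \{dx_i\}_{i=1}^n \cup \{d\lambda\}_{\lambda \in \Lambda}$. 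Expanding $1 \otimes \overline d_{S/\Z}(f_j)$ in this basis yields precisely the $j$-th column of the mixed Jacobian matrix $\J^{\pi}_R$ of \cref{mixedJacobianMatrix}; that is, $\J^{\pi}_R$ is the matrix representing $\J$ in the chosen bases.

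Finally, tensoring with $k(\q)$ reduces the rank of $\J \otimes_{\overline R} k(\q)$ to the rank of $\J^{\pi}_R$ mod $\q$ over the field $k(\q)$. Standard linear algebra says this rank is $\geq h$ if and only if some $h \times h$ minor of $\J^{\pi}_R$ does not lie in $\q$. Moreover, the rank cannot exceed $h$: by the commutative triangle in the proof of \cref{detectingNonsingularity}, this rank equals the rank of $\eta_\q : I S_\q/I\q S_\q \to \q S_\q/\q^2 S_\q$, and any $r$ elements of $I$ whose images are $k(\q)$-linearly independent in $\q S_\q/\q^2 S_\q$ form part of a regular system of parameters of the regular local ring $S_\q$, hence form a regular sequence in $IS_\q$, forcing $r \leq \height IS_\q = h$. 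Therefore, ``rank $=h$'' is equivalent to ``rank $\geq h$'', which is equivalent to ``$\q$ does not contain all $h \times h$ minors of $\J^{\pi}_R$''. Combined with the equivalence from \cref{detectingNonsingularity}, this proves the theorem. The principal bookkeeping step — and the one to be careful about — is verifying in the third paragraph that the basis identification of $\overline \Omega_{S/\Z}$ coming from \cref{equivUniversalMaps} lines the entries of $\J$ up, row-for-row and column-for-column, with the matrix of \cref{mixedJacobianMatrix}; once that is in hand, the rest is formal.
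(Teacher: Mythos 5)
Your proposal is correct and follows essentially the same path as the paper's proof: verify the hypotheses of \cref{setup} via \cref{mainEx}, invoke \cref{detectingNonsingularity} to reduce to a rank condition on $\J \otimes_{\overline R} k(\q)$, use \cref{equivUniversalMaps} to recognize the matrix of $\J$ as $\J^{\pi}_{R}$, and then translate the rank condition into a statement about $h \times h$ minors. One point where you are actually more careful than the paper: the paper jumps from ``rank $= h$'' to ``some $h\times h$ minor is nonzero'' without explicitly noting this requires rank $\leq h$ (an inequality the paper only justifies afterwards, in \cref{rank<=h} and only for minimal primes of $I$, then extended in \cref{JacobianCriterion}), whereas your direct argument via the commutative triangle and regular sequences in the regular local ring $S_\q$ settles rank $\leq h$ cleanly for any relevant $\q$ and makes the equivalence airtight.
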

\begin{proof}
    $R_{\q}$ is a regular local ring if, and only if, ${\rank_{k(\q)}(\J \otimes_{\overline R} k(\q)) = \height IR_{\q}}$ where $\J$ is the $\overline R$-linear map ${\frac{I}{I^2+I\ma} \xhookrightarrow{\J} \overline R \otimes_{\overline S} \overline \Omega_{S/A}}$ where ${\J ([f]) = 1 \otimes \overline d_{S/\Z}(f)}$ for ${f \in I}$; see Theorem \ref{detectingNonsingularity}. Due to Corollary \ref{equivUniversalMaps}, we have ${\overline d_{S/\Z} = d^{\pi}_{S/\Z}}$ as defined in Theorem \ref{universalDer}. Now it follows that the image of $\J$ is the same as the image of $\J^{\pi}_{R}$. Now replacing $\J$ by $\J^{\pi}_{R}$, the assertion follows as ${\rank_{k(\q)}(\J^{\pi}_{R} \otimes_{\overline R} k(\q)) = \height IR_{\q}}$ if, and only if, there exist a non-vanishing $h \times h$ minor of $\J^{\pi}_{R}$ over $k(\q)$ which is equivalent to saying  $\q$ does not contain all $h \times h$ minors of $\J^{\pi}_{R}$. 
\end{proof}

\begin{remark} \label{rank<=h}
   For any minimal prime $\q$ of an ideal $I = (f_1, \ldots, f_t) \subseteq S$, we have $\rank_{k(\q)}(\J^{\pi}_R \otimes_{\overline R} k(\q)) \leq h = \height I$. Indeed, localizing at $\q$, $S_{\q}$ is a regular local ring of dimension $h$. Then we have $\q S_{\q} = (g_1, \ldots, g_h)S_{\q}$ minimally. As $IS_{\q} \subseteq \q S_{\q}$, we have that $f_i = \sum_{k=1}^h r_{ik}g_k$ in $S_{\q}$ for all $i$. We have by the Leibniz rule $\partial f_i/\partial \pi = \sum_{k=1}^h r_{ik} (\partial g_k/ \partial \pi) + \sum_{k=1}^h (\partial r_{ik} / \partial \pi)  g_k$ for $i = 1, \ldots, t$,  $\partial f_i/\partial x_j = \sum_{k=1}^h r_{ik} (\partial g_k/ \partial x_j) + \sum_{k=1}^h (\partial r_{ik} / \partial x_j)  g_k$ for $i = 1, \ldots, t$ and $j = 1, \ldots, n$ and  $\partial f_i/\partial \lambda = \sum_{k=1}^h r_{ik} (\partial g_k/ \partial \lambda) + \sum_{k=1}^h (\partial r_{ik} / \partial \lambda)  g_k$ for $i = 1, \ldots, t$ and $\lambda \in \Lambda$ for $\Lambda$ a $p$-base of $V/\pi V$. Now modulo $\q S_{\q}$, the second summand goes away in all cases. Therefore, $J^{\pi}_R \otimes_{\overline R} k(\q) = [\bar r_{ij}] J'^{\pi}$, where $J'^{\pi}$ denotes the mixed jacobian matrix on the generators of $\q S_{\q}$. Note then that $J'^{\pi}$ is a matrix with $h$ rows, so its $h+1$ minors are $0$. This implies that $h+1$ minors of $\J^{\pi}_R \otimes_{\overline R} k(\q)$ is also $0$ as this matrix factors through a $k(\q)$-vector space of dimension $h$. Therefore, $\rank_{k(\q)}(\J^{\pi}_R \otimes_{\overline R} k(\q)) \leq h$.
\end{remark}
\begin{corollary} \label{JacobianCriterion}
     Suppose $R = S/I$ for some ideal $I \subseteq S$ such that $I$ is of pure height $h$ (i.e. all minimal primes of $I$ have the same height and $I$ has no embedded primes). Then \[ \Sing R \cap \V(p) = \V(I_{h}(\J^{\pi}_R)) \subseteq \Spec(R/(\pi)R) \]where we identify $V(\pi) = V(p) \subseteq \Spec R$ with $\Spec(R/(\pi) R)$ and $I_h(\J^{\pi}_R)$ denotes the ideal generated by $h \times h$ minors of $\J^{\pi}_R$ in $R/(\pi)R$. In particular, $\Sing R \cap \V(p)$ is a closed subset of $\Spec R$. 
\end{corollary}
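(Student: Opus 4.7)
The plan is to apply the pointwise criterion of Theorem \ref{JacobianSees} at every prime of $R$ containing $p$, and to use the pure-height hypothesis on $I$ to make the local height $\height IS_{\q}$ equal to the global $h$ uniformly. Fix $\q \in \V(p) \subseteq \Spec R$; via the identification $\V(\pi) = \Spec(R/(\pi)R)$, we may equivalently regard $\q$ as a prime of $S$ containing $I + (\pi)$. Theorem \ref{JacobianSees} then tells us that $R_{\q}$ is regular if and only if $\q \not\supseteq I_h(\J^{\pi}_R)$, provided the local height coincides with the global integer $h = \height I$.

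To secure this height equality, I would argue that pure height of $I$ forces $\height IS_{\q} = h$ for every prime $\q \supseteq I$. Indeed, $\height IS_{\q}$ equals the minimum of $\height(\mathfrak p S_{\q})$ over minimal primes $\mathfrak p$ of $I$ contained in $\q$; each such $\mathfrak p$ has height $h$ by hypothesis, localization preserves these heights, and at least one minimal prime of $I$ lies in $\q$, so the minimum is exactly $h$. With this in hand, Theorem \ref{JacobianSees} yields, pointwise for every $\q \in \V(p)$, the equivalence $\q \in \Sing R \iff \q \in \V(I_h(\J^{\pi}_R))$, producing the set-theoretic equality claimed.

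The final assertion of closedness in $\Spec R$ is then immediate: $\V(I_h(\J^{\pi}_R))$ is closed in $\Spec(R/(\pi)R)$ by construction, and $\Spec(R/(\pi)R) = \V(\pi)$ is closed in $\Spec R$. The only genuine technical point is the uniform height calculation, and this is precisely where the pure-height hypothesis is essential: without it, $\height IS_{\q}$ could drop below $h$ at some primes and no single fixed-size ideal of minors of $\J^{\pi}_R$ would detect singularity at every point of $\V(p)$. Everything else is bookkeeping translating between the pointwise statement and its globalization.
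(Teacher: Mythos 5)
Your proof is correct and follows essentially the same route as the paper's: use the pure-height hypothesis to identify the local height $\height IS_{\q}$ with the global $h$ at every prime of $\V(p)$, then apply \cref{JacobianSees} pointwise and observe closedness. The only cosmetic difference is that the paper's proof additionally invokes \cref{rank<=h} to note $\rank_{k(\q)}(\J^{\pi}_R\otimes k(\q))\leq h$ before citing \cref{JacobianSees}, whereas you rely on \cref{JacobianSees} as stated (which already incorporates that bound), so the omission is not a gap.
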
 
\begin{proof}
     Note that for any $\q \in \Spec S$, $\height IS_{\q} = h$ as $I$ is of pure height $h$. By Remark \ref{rank<=h}, for minimal primes $\q$ over $I$, $\rank_{k(\q)}(\J^{\pi}_R \otimes_{\overline R} k(\q)) \leq h$. Since rank can only go down going modulo larger primes, $\rank_{k(\q)}(\J^{\pi}_R \otimes_{\overline R} k(\q))\leq h$ for all $\q \supseteq I$.  Now for a prime ideal $\q \in \Spec S$ such that $ \q \supseteq I + (\pi)S$, equivalently $\q \in \Spec R \cap \V(\pi) = \Spec R \cap \V(p)$, we have by Theorem \ref{JacobianSees}, $R_{\q}$ is singular (i.e. not a regular local ring) if, and only if, $I_h(J^{\pi}_R) \subseteq \q$ in $R/(\pi)R$. Therefore, the assertion follows. 
\end{proof}

\begin{remark} \label{HJwork}
    Noting \cref{moduleDeri}, given a presentation for a finitely generated algebra over a ramified discrete valuation ring, \cref{JacobianCriterion} gives an easy way to compute its singular locus of primes containing the prime integer $p$. We now explain how the results of Hochster-Jeffries and Saito apply to compute singular loci of algebras over ramified discrete valuation rings. 
    
    Suppose $V$ is a ramified discrete valuation of mixed characteristic with uniformizer $\pi$. We claim that $V$ does not admit a $p$-derivation modulo $p^2$. For sake of contradiction, assume $\delta_p: V/p^2V \to V/pV$ is a $p$-derivation modulo $p^2$. Then $\delta_p(p)$ is a unit. However, as $V$ is ramified $(\overline \pi)$ is a nonzero maximal ideal of $V/pV$ and since $p \in (\pi)^2$, $\delta_p(p) \subseteq \delta_p(\pi)(\overline{\pi})$ which is a contradiction. So the results in \cite{Hochster/Jeffries:2021, Saito:2022} cannot be extended directly. 
    
    Suppose $V$ is a finite algebra over an unramified discrete valuation subring, i.e. \[ V \cong U[x_1, \ldots, x_n]/I\]where $(U, pU)$ is an unramified discrete valuation ring. In this case, any finitely generated $V$ algebra can be viewed as a finitely generated $U$ algebra. If we can find such a presentation for $V$, then the criterion of Hochster-Jeffries and Saito applies. For example, if $(V, \pi)$ is a $\pi$-complete discrete valuation ring, then by Cohen's structure theorem, $V$ admits the desired presentation. However, given an arbitrary ramified discrete valuation $V$, it is not easy to write down the presentation as above, even for a $\pi$-complete $V$. In this sense, \cref{JacobianCriterion} gives a simpler alternative. On the other hand, we suspect there exists a non-complete ramified discrete valuation ring that is not finitely generated over any of its unramified discrete valuation subring, but we have not been able to find such an example in the literature. 
    
    \end{remark}


\begin{remark}\label{calculation}
    Suppose $(V, \pi)$ is a ramified discrete valuation ring with uniformizer $\pi$ such that its residue field $k = V/\pi V$ is $F$-finite. Set $S = V[x_1, \ldots, x_n]$ and fix $\p \in \Spec S \cap \V(\pi)$. Let $a = \log_p[k: k^p]$ and $b = \log_p[k(\p): k(\p)^p]$. We want to show that $\dim S + a = \height \p + b$. This argument is carried out in proof of \cite[Theorem 4.13]{Hochster/Jeffries:2021} but their argument has a small typo so we spell out all the details here. We have $\overline S = S/\pi S = k[x_1, \ldots, x_n]$ is an $F$-finite ring of characteristic $p > 0$, so given prime ideals $\q \subseteq \q'$ in $\overline S$, we have the formula $\height(\q'/\q) = \log_p[k(\q): k(\q)^p]-\log_p[k(\q'):k(\q')^p]$ \cite{Kunz:1976}. Applying this formula for $(0) \subseteq (x_1, \ldots, x_n)$, note that we get $n + a = \log_p[\text{Frac}(\overline S) - \text{Frac}(\overline S)^p]$. Then applying it with $(0) \subseteq \overline \p = \p/(\pi)$, we get $\height(\overline \p) = (n+a) - \log_p[k(\p): k(\p)^p] = (n+a) - b$, noting that $k(\p) = k(\overline \p)$ as $\pi \in \p$. Finally, since $\height(\overline \p) = \height \p - 1$ and $\dim S = n + 1$, we have $\dim S + a = \height \p + b$. 
\end{remark}

\begin{corollary} \label{non-free locus}
    Let $(V, \pi)$ be a ramified discrete valuation ring of mixed characteristic $(0, p)$ with uniformizer $\pi$ such that $k = V/\pi V$ is an $F$-finite field. Suppose $(R, \m, K)$ is a local algebra of essentially finite type over $V$ such that $p \in \m$. Set $\overline R = R/(\pi)R$ and $\overline \Omega_{R/\Z} = \Omega_{R/\Z}/(\pi)\Omega_{R/\Z}$. Then $R$ is a regular local ring if, and only if, $\overline \Omega_{R/\Z}$ is a free $\overline R$ module of rank $\dim R + \log_p[K : K^p]$. 
\end{corollary}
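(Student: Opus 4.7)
The plan is to reduce the statement to an application of the Jacobian criterion \cref{JacobianSees} together with a rank count in the right-exact sequence of \cref{2ndseq}. Write $R = S_{\q}/IS_{\q}$ where $S = V[x_1,\ldots,x_n]$ and $\q \in \Spec S$ is the contraction of $\m$; set $h = \height IS_{\q}$, $a = \log_p[k:k^p]$, and $b = \log_p[K:K^p]$. Localizing \cref{2ndseq} at $\q$ with $\ma_S = (\pi)S$ and using \cref{equivUniversalMaps} to identify $\overline R \otimes_{\overline S} \overline \Omega_{S/\Z}$ as a free $\overline R$-module of rank $n+1+a$ yields the right-exact sequence
\[ \frac{I}{I^2+\pi I} \xrightarrow{\;\J\;} \overline R^{\,n+1+a} \longrightarrow \overline \Omega_{R/\Z} \longrightarrow 0. \]
Since $S_{\q}$ is Cohen--Macaulay, $\dim R = \height \q - h$, and \cref{calculation} gives $n+1+a = \height \q + b$; together these supply the key identity $n+1+a - h = \dim R + b$.

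For the backward direction, if $\overline \Omega_{R/\Z}$ is free of rank $\dim R + b$, tensoring the displayed sequence with $K$ and counting $K$-dimensions forces $\rank_K(\J \otimes K) = (n+1+a) - (\dim R + b) = h$, so \cref{JacobianSees} immediately gives that $R$ is regular.

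For the forward direction, assume $R$ is regular. Then $IS_{\q}$, being a height-$h$ ideal of the regular local ring $S_{\q}$ with regular quotient, is minimally generated by $h$ elements forming a regular sequence $f_1,\ldots,f_h$; in particular $I/I^2$ is $R$-free of rank $h$, giving $I/(I^2+\pi I) \cong \overline R^{h}$. Choosing $f_1,\ldots,f_h$ as the presenting generators makes $\J$ a map $\overline R^{h} \to \overline R^{n+1+a}$, and \cref{JacobianSees} supplies a unit $h \times h$ minor in $\overline R$, so $\J$ is split injective with cokernel free of rank $n+1+a - h = \dim R + b$. The main subtlety lies precisely in this step: obtaining genuine freeness of $\overline \Omega_{R/\Z}$ requires split-injectivity of $\J$, not just the correct generator count, and this is available only because regularity of $R$ permits working with exactly $h$ columns so that the unit minor produced by the Jacobian criterion directly witnesses a splitting.
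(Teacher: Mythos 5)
Your proof is correct and follows the same overall strategy as the paper: use Proposition~\ref{2ndseq} with Corollary~\ref{equivUniversalMaps} to present $\overline\Omega_{R/\Z}$ by the mixed Jacobian matrix, invoke the numerology of Remark~\ref{calculation} to get $\dim S + a = \height\q + b$, and close via Theorem~\ref{JacobianSees}. The real difference is how the translation between freeness and the rank condition is carried out. The paper routes through Fitting ideals, asserting that $F_{\dim R + b}(\overline\Omega_{R/\Z}) = \overline R$ and $F_{<\dim R + b}(\overline\Omega_{R/\Z}) = 0$ is equivalent to $\rank(\J\otimes k(\p)) = \height I$; the nontrivial implication there (rank $= h$ over the residue field forces the $(h+1)$-minors to vanish identically over $\overline R$) is not spelled out. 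Your forward direction handles exactly this point: you use regularity of $R$ to replace the generators of $IS_\q$ by a minimal regular sequence of length $h$, so the presentation has only $h$ columns, the higher minors vanish trivially, and the unit $h\times h$ minor from Theorem~\ref{JacobianSees} witnesses a split injection $\overline R^h \hookrightarrow \overline R^{\,n+1+a}$ with free cokernel. This is, if anything, more careful than the paper's Fitting-ideal shortcut; the backward direction (free $\Rightarrow$ rank count $\Rightarrow$ regular) is identical in substance. Both approaches buy the same theorem; yours makes the ``regular $\Rightarrow$ free'' step self-contained without appealing to the presentation-independence of Fitting ideals.
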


\begin{proof}

    We closely follow the proof of \cite[Theorem 4.13]{Hochster/Jeffries:2021}. Suppose $R = (S/I)_{\p}$ for $S = V[x_1, \ldots, x_n]$ and $I = (\mathbf{f}) = (f_1, \ldots, f_t) \subseteq S$ an ideal such that $\p \supseteq I$. Let $\J^{\pi}(\mathbf{f})$ be the mixed Jacobian matrix on the generators of $I$ as in Definition \ref{mixedJacobianMatrix} considered as a matrix over $\overline R$. We have by Proposition \ref{2ndseq} and Corollary \ref{equivUniversalMaps} that $\J^{\pi}(\mathbf{f})$ is a presentation matrix for $\overline \Omega_{R/\Z}$. In particular, $\overline \Omega_{R/\Z}$ is generated by $\dim S + a$ elements where $a = \log_p[k : k^p]$ which is the size of a $p$-basis of $k$. Set $b = \log_p[K : K^p]$. We have $\dim S + a = \height \p + b = \dim S_{\p} + b = \height I_{\p} + \dim R + b $ by the calculation in Remark \ref{calculation}. Now by the characterization of locally free modules via Fitting ideals, $\overline \Omega_{R/\Z}$ is locally free of rank $\dim R + b$ if and only if $F_{\dim R + b}(\overline \Omega_{R/\Z}) = \overline R$ and $F_{< \dim R + b}(\overline \Omega_{R/\Z}) = 0$. Using the fact that $\J^{\pi}(\mathbf{f})$ is a presentation matrix for $\overline \Omega_{R/\Z}$ and that $(\dim S + a) - (\dim R + b) = \height I $, this translates to saying $I_{\height I}(\J^{\pi}(\mathbf{f})) = \overline R$ and $I_{> \height I}(\J^{\pi}(\mathbf{f})) = 0$. But this is if and only if $\rank(J^{\pi}_{S/I} \otimes_{S/I} k(\p)) = \height I$ which by Theorem \ref{JacobianSees} is equivalent to $(S/I)_{\p} = R$ being a regular local ring.  
\end{proof}


\bibliographystyle{amsplain}
\bibliography{references}

\providecommand{\bysame}{\leavevmode\hbox to3em{\hrulefill}\thinspace}
\providecommand{\MR}{\relax\ifhmode\unskip\space\fi MR }
\providecommand{\MRhref}[2]{%
  \href{http://www.ams.org/mathscinet-getitem?mr=#1}{#2}
}
\providecommand{\href}[2]{#2}
\begin{thebibliography}{10}

\bibitem{Brezuleanu/Radu:1986}
Alexandru Brezuleanu and Nicolae Radu, \emph{On the {J}acobian criterion of {N}agata-{G}rothendieck}, Rev. Roumaine Math. Pures Appl. \textbf{31} (1986), no.~6, 513--517. \MR{856203}

\bibitem{DeStefani/Grifo/Jeffries:2020}
Alessandro De~Stefani, Elo\'{\i}sa Grifo, and Jack Jeffries, \emph{A {Z}ariski-{N}agata theorem for smooth {$\Bbb{Z}$}-algebras}, J. Reine Angew. Math. \textbf{761} (2020), 123--140. \MR{4080246}

\bibitem{DeStefani/Grifo/Jeffries:2022}
\bysame, \emph{A uniform {C}hevalley theorem for direct summands of polynomial rings in mixed characteristic}, Math. Z. \textbf{301} (2022), no.~4, 4141--4151. \MR{4449742}

\bibitem{Eisenbud:1995}
David Eisenbud, \emph{Commutative algebra}, Graduate Texts in Mathematics, vol. 150, Springer-Verlag, New York, 1995, With a view toward algebraic geometry. \MR{1322960}

\bibitem{Hochster/Jeffries:2021}
Melvin Hochster and Jack Jeffries, \emph{A {J}acobian criterion for nonsingularity in mixed characteristic}, Amer. J. Math., to appear. (2021).

\bibitem{Kunz:1976}
Ernst Kunz, \emph{On {N}oetherian rings of characteristic {$p$}}, Amer. J. Math. \textbf{98} (1976), no.~4, 999--1013. \MR{432625}

\bibitem{Nagata:1957}
Masayoshi Nagata, \emph{A {J}acobian criterion of simple points}, Illinois J. Math. \textbf{1} (1957), 427--432. \MR{90095}

\bibitem{Saito:2022}
Takeshi Saito, \emph{Frobenius-{W}itt differentials and regularity}, Algebra Number Theory \textbf{16} (2022), no.~2, 369--391. \MR{4412577}

\bibitem{Samuel:1955}
Pierre Samuel, \emph{Simple subvarieties of analytic varieties}, Proc. Nat. Acad. Sci. U.S.A. \textbf{41} (1955), 647--650. \MR{72530}

\bibitem{Zariski:1947}
Oscar Zariski, \emph{The concept of a simple point of an abstract algebraic variety}, Trans. Amer. Math. Soc. \textbf{62} (1947), 1--52. \MR{21694}

\end{thebibliography}

\end{document}